\newtheorem{thm}{Theorem}[section]
\newtheorem*{thmstar}{Theorem}
\newtheorem{lemma}[thm]{Lemma}
\newtheorem{cor}[thm]{Corollary}
\theoremstyle{remark}
\newtheorem{rem}[thm]{Remark}
\theoremstyle{definition}
\newtheorem{defn}[thm]{Definition}
\newtheoremstyle{Claim}{}{}{\itshape}{}{\itshape\bfseries}{:}{ }{#1}
\theoremstyle{Claim}
\newcommand{\R}{\mathbb{R}}
\newcommand{\N}{\mathbb{N}}
\newcommand{\eps}{\varepsilon}
\theoremstyle{plain}
\def\sideremark#1{\ifvmode\leavevmode\fi\vadjust{
\vbox to0pt{\hbox to 0pt{\hskip\hsize\hskip1em
\vbox{\hsize3cm\tiny\raggedright\pretolerance10000
\noindent #1\hfill}\hss}\vbox to8pt{\vfil}\vss}}}
\begin{document}

\title[Global geometric estimates for the heat equation via duality methods]{Global geometric estimates for the heat equation\\ via duality methods}

\author{Alessandro Goffi}
\address{Dipartimento di Matematica e Informatica ``Ulisse Dini'', Universit\`a degli Studi di Firenze, 
viale G. Morgagni 67/A, 50134 Firenze (Italy)}
\curraddr{}
\email{alessandro.goffi@unifi.it}

\author{Giulio Tralli}
\address{Dipartimento di Matematica e Informatica, Universit\`a degli Studi di Ferrara, Via Machiavelli 30, 44121 Ferrara (Italy)}
\curraddr{}
\email{giulio.tralli@unife.it}

 \thanks{
Both authors are member of the Gruppo Nazionale per l'Analisi Matematica, la Probabilit\`a e le loro Applicazioni (GNAMPA) of the Istituto Nazionale di Alta Matematica (INdAM), and they were partially supported by the INdAM-GNAMPA project 2024. 
}

\date{\today}

\subjclass[2020]{35B45, 35B65, 35K20}
\keywords{Li-Yau and Hamilton inequalities, Hopf-Cole transformation, convexity preserving and semiconvexity estimates}

\maketitle
\begin{abstract}
We discuss first-order and second-order regularization effects for solutions to the classical heat equation. In particular we propose a global approach to study smoothing effects of Hamilton-Li-Yau type: such approach is nonlinear in spirit and it is based on the Bernstein method and duality techniques \`a la Evans. In a similar way, we also deal with the conservation of geometric properties for the heat flow as initiated by Brascamp-Lieb. In contrast to maximum principle methods based on sup-norm procedures, the integral method we adopt relies on contractivity properties for advection-diffusion equations and it applies to problems with homogeneous Neumann conditions posed equally on bounded and unbounded convex domains under suitable assumptions on their geometry. 
\end{abstract}

\section{Introduction}
\subsection{Overview}
In 1986 P. Li and S.-T. Yau \cite{LiYauActa} proved the following estimate for the heat flow on Riemannian manifolds
\begin{equation}\label{LYintro}
\Delta \log u\geq -\frac{n}{2t},
\end{equation}
$u$ being the solution to the heat equation with respect to the Laplace-Beltrami operator in complete manifolds having nonnegative Ricci curvature, or on compact manifolds with convex boundary where it is prescribed a homogeneous Neumann boundary condition. This inequality is known in the literature as differential Harnack estimate, because it leads, after an integration over a path connecting two different points, to a sharp Harnack inequality for solutions of the heat equation \cite{Davies,LiBook,SchoenYau}. This estimate has its roots in the earlier works \cite{ChengYau,Yau} for elliptic equations, and it was also considered and deepened in a variety of settings, see e.g. \cite{BakryLedoux,ChowHamilton,HanZhang,NiSurvey,Stroock,ZhangIMRN} and the references therein.\\
A further step in this analysis was given by R. Hamilton in \cite{Hamilton}. He proved the following Hessian estimate for the heat flow on a compact Riemannian manifold which is Ricci-parallel and has nonnegative sectional curvatures
\begin{equation}\label{Hintro}
D^2\log u\geq \frac{-\mathbb{I}_n}{2t}.
\end{equation}
Along the way, he also provided the following first-order regularization effect for bounded solutions $0<u\leq A$ of the heat equation on the same domains
\begin{equation}\label{HSZintro}
|D\log u|^2\leq \frac{1}{t}\log\left(\frac{A}{u}\right).
\end{equation}
This bound was then deeply analyzed in \cite{SoupletZhang}, where the authors obtained a local version and addressed several consequences, such as Liouville theorems for entire solutions of the heat equation with sub-exponential decay, see also \cite{Stroock} for a different analysis. \\
A tightly related question is the conservation of log-concavity/convexity from the initial datum for the heat equation. This was first discovered by H. Brascamp and E. Lieb in \cite{BrascampLieb} for the Dirichlet heat flow, where they proved that
\begin{equation}\label{BLintro}
\log u(0)\text{ concave}\implies \log u(t)\text{ concave for $t>0$},
\end{equation}
provided that the (space-dependent) potential of the heat equation is convex.\\ 
These regularity effects can be justified in view of the global behavior of solutions of the so-called viscous Hamilton-Jacobi equation solved by $v=\log u$
\begin{equation}\label{vHJ}
\partial_t v-\Delta v-|Dv|^2=0.
\end{equation}
The nonlinearity appearing in \eqref{vHJ}, i.e. $H(Dv)=-|Dv|^2$, is uniformly concave, and solutions to these equations are known to be semiconvex for positive times, see e.g. \cite{BKL,CS,Douglis,Fleming,K67,Lions82Book}. This property holds even without the second-order diffusive term $-\Delta v$ via the Hopf-Lax formulation \cite{EvansBook,Lions82Book}. Furthermore, such solutions share similar effects to those of the heat equation, as they become instantaneously Lipschitz continuous for positive times \cite{Lions82Book,Lions85aa}. This analogy suggests that these regularizing effects are determined by the nonlinearity $H$ appearing in the equation solved by $\log u$.\\
 In addition, one can justify these smoothing properties in view of a famous analogy among (first-order) Hamilton-Jacobi equations and a regularization procedure of continuous functions introduced in the theory of viscosity solutions to study the well-posedness of fully nonlinear equations \cite{CC,LasryLionsIsrael}. The paper \cite{LasryLionsIsrael} highlighted that one can approximate a continuous function $v$ with the semiconvex function
\[
v^\eps(x)=\sup_{y\in\R^n}\left\{v(y)-\frac{|x-y|^2}{2\eps}\right\}.
\]
If one considers now the Cauchy problem
\begin{equation}\label{firstHJ}
\begin{cases}
\partial_t v(x,t)-\frac{|Dv(x,t)|^2}{2}=0&\text{ in }\R^n\times(0,T),\\
v(0)=u(x)&\text{ in }\R^n,
\end{cases}
\end{equation}
the Hopf-Lax formula shows that
\[
v(x,t)=\sup_{y\in\R^n}\left\{u(y)-\frac{|x-y|^2}{2t}\right\}.
\]
Therefore, if we set $t=\eps$, by the identity $u^\eps(x)=v(x,\eps)$ for $x\in\R^n$, one expects natural first- and second-order regularizing effects as time evolves for the solution of \eqref{firstHJ}.\\
Recall that the inf/sup-convolution procedure is tied up with a classical regularization method by integral convolution. Starting with the Cauchy problem \eqref{firstHJ} with initial condition $v(0)=g(x)$, one can consider the regularized viscous equation
\[
\partial_tv-\eps\Delta v-|Dv|^2=0
\]
and use the so-called Laplace lemma to show that the sup-convolution can be obtained as a singular limit of integral convolutions
\[
\lim_{\eps\to0^+}2\eps\log\left((4\pi \eps t)^{-n/2}\int_{\R^n}e^{-\frac{|x-y|^2}{4\eps t}}e^{\frac{g(x)}{2\eps}}\,dy\right)=\sup_{y\in\R^n}\left\{g(y)-\frac{|x-y|^2}{2t}\right\}.
\]
This is the starting point of the large deviation theory, see Chapter 11 in \cite{Brenier} for full details of the proof.\\

It is also worth mentioning that estimates \eqref{LYintro}-\eqref{Hintro} admit several (other) nonlinear counterparts: a precursory form of the estimate appeared in the context of the porous medium equation and was introduced by D. G. Aronson and P. B\'enilan \cite{AronsonBenilan,Villanietal,VazquezBook}, then for the flow driven by the $p$-Laplacian \cite{EstebanMarcati,EstebanVazquez}, but it also represents a criterium to establish the uniqueness of entropy solutions in the theory of scalar conservation laws, as initiated by O. Ole{\u{\i}}nik \cite{Oleinik}.  \\

It is important to remark here that all the previous global estimates were proved mostly on $\R^n$ or more general complete manifolds (under bounds on their geometry) using nonvariational techniques based on the maximum principle. This technique  of proving one- or two-side estimates by differentiating the equation and applying the maximum principle to the equation solved by the derivatives of the solution is classical and dates back to \cite{Bernstein,BrezisKinderlehrer,Serrin}. Nonetheless, a general treatment of the aforementioned estimates on unbounded settings seems missing in the literature, as it needs generalized maximum principles as in \cite{Yau}. Our aim here is to go beyond classical methods based on the maximum principle, and provide a unifying strategy to prove directly global bounds on general bounded and unbounded domains $\Omega$. Inspired by ideas introduced by L. C. Evans \cite{EvansARMA10}, we adopt a duality approach based on integration by parts techniques. More precisely, we study first- and second-order sup-norm and integral bounds for solutions to \eqref{vHJ} shifting the attention to the formal adjoint of its linearization, that is the backward problem
\begin{equation}\label{fpintro}
-\partial_t \rho-\Delta \rho+\mathrm{div}(b(x,t)\rho)=0\text{ in }\Omega\times(0,\tau).
\end{equation}
This is a natural choice in terms of stochastic control, see \cite{Lions82Book, PorrARMA}. Suitable choices for the terminal datum $\rho(\tau)=\alpha$, the drift $b$, and $L^p$ conservation/contractivity properties of $\rho$ combined with ad hoc integral versions of the Bochner's identity for \eqref{vHJ} provide estimates at different scales for derivatives of the dual solution $v$. As far as equation \eqref{fpintro} is concerned, our main contribution will be a new $L^1$-preserving property for such equations having bounded coefficients posed on unbounded domains with Neumann conditions, see Theorem \ref{well}.

This duality viewpoint was also taken in earlier works in \cite{EvansKAM,LinTadmor}, and more recently in the analysis of nonlinear PDEs and Mean Field Games \cite{surveyMFG,cg20,GomesSM,LL,PorrARMA,TranBook}. Our main results investigate \eqref{LYintro},\eqref{Hintro},\eqref{HSZintro},\eqref{BLintro} on bounded and unbounded domains of the Euclidean space. We prove in particular these estimates when either $\Omega=\R^n$ or $\Omega$ is a suitably regular unbounded convex domain and the problem is equipped with the homogeneous Neumann boundary condition. We also test our method to derive an a priori gradient estimate in $L^p$ spaces, $p\geq2$, that takes the form
\begin{equation}\label{pest}
\|Du(\tau)\|_{L^p(\Omega)}^2\leq \frac{1}{2\tau}\|u(0)\|_{L^{p}(\Omega)}^2,\ 2\leq p\leq\infty.
\end{equation}
We complement our result by proving the upper Laplacian estimate
\begin{equation}\label{reversed}
\Delta \log u\leq \frac{1}{t}\left(n+\frac{7}{2}\log\left(\frac{A}{u}\right)\right).
\end{equation}
Differently from \eqref{LYintro}, the diffusion $\Delta v$ plays a crucial role for the derivation of this bound, see Theorem \ref{HSZ2nd}. In view of the analogy with Hamilton-Jacobi equations, recall that a related phenomenon occurs in the theory of these first-order nonlinear equations: indeed, solutions are known to be $C^{1,1}$ (hence, semiconcave other than semiconvex) for small times, see \cite[Chapter 12]{Lions82Book}. This reversed effect was also pointed out for heat equations with compactly supported initial data in \cite{LeeVazquez} and recently analyzed in \cite{HanZhang}. \\

We conclude the paper by exploring a further application of this duality approach to the preservation of concavity/convexity preserving properties of solutions from the data, as initiated by H. Brascamp and E. Lieb in \cite{BrascampLieb} for the Dirichlet heat flow.  In \cite{BrascampLieb} the proof is based on the explicit representation of the solution through the heat kernel and the application of the Prekopa-Leindler inequality. 
There are nowadays several approaches to establish these geometric properties. One is related to the “macroscopic convexity principle” that involves a globally defined function of two or three variables. This is always accompanied by maximum principle methods and it was the object of the works \cite{Kawohl,Korevaar}, sometimes involving the theory of viscosity solutions \cite{Gigaetal,IshiiLions}. Other methods are based on the so-called constant rank theorems and are related with the so-called ``microscopic'' approach to these geometric properties. They were started almost simultaneously by \cite{CaffarelliFriedman} and \cite{Yauetal}, see also \cite{Weinkove} and the references therein. These results amount to proving that a convex solution of a certain class of elliptic/parabolic PDE has Hessian of constant rank. 
A different viewpoint was undertaken by \cite{AlvarezLasryLions} through the use of viscosity solutions techniques and the convex envelope of the unknown function. This technique was also pushed in \cite{LionsMusiela} to explore necessary and sufficient conditions guaranteeing the convexity property. Propagation of more general notions of convexity and further references can be found in \cite{SalaniMemoirs}.
We mention that only recently a different method through the use of differential games was launched by \cite{KohnSerfaty}, see the survey \cite{LiuSurvey} for recent results in this direction. 

None of these ways of showing the propagation of convexity type properties of linear and nonlinear PDEs use integral techniques, so that all of these results require a control of the data (mainly the potential and/or the initial datum) and their derivatives in sup-norms. Remarkably, other than establishing a new global approach in $\R^n$ towards the macroscopic convexity principle, we weaken some of the hypotheses on the data with respect to the known results in the literature.\\

As a further motivation, we believe that our study could be a useful starting point for a systematic analysis of first- and second-order estimates of Hamilton-Jacobi equations in the presence of Neumann boundary conditions on convex and nonconvex unbounded environments, where few results are available, see e.g. \cite{Lions82Book}.

\subsection{Warm-up: Hamilton matrix Harnack estimates and Li-Yau inequalities}\label{warm}
We start with a quick description of the duality method we will exploit throughout the manuscript. In particular, we begin by testing our integral approach to prove the (semiconvexity) Hamilton matrix estimate \eqref{Hintro} and, as a consequence, we derive the Li-Yau estimate \eqref{LYintro}. As we mentioned, these estimates are well-known, and in case of $\R^n$ they are known to be optimal by looking at the explicit example of the (Gauss-Weierstrass) fundamental solution for the heat equation. The following proof will require a conservation of mass property for the dual Fokker-Planck equation \eqref{fpintro}. The precise assumptions will be clear to the reader in the next sections.

\begin{thmstar}
Fix $T>0$. Let $u$ be a positive solution of
\[
\partial_t u-\Delta u=0\text{ in }\R^n\times(0,T)
\]
such that $u$ has bounded $x$-gradient and $x$-derivatives up to fourth-order in $L^2$. Then the following matrix estimate holds for any $\tau\in(0,T)$
\begin{equation}\label{HamiltonEst}
D^2\log u(\tau)\geq \frac{-\mathbb{I}_n}{2\tau}
\end{equation}
and, in particular,
\begin{equation}\label{LiYauEst}
\Delta(\log u)(\tau)=\frac{\partial_t u}{u}-\frac{|Du|^2}{u^2}\geq -\frac{n}{2\tau}.
\end{equation}
\end{thmstar}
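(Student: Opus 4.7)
The plan is to implement the duality approach announced at the beginning of this subsection. Set $v=\log u$, which is smooth and solves the viscous Hamilton-Jacobi equation \eqref{vHJ}. Fix a unit vector $\xi\in\R^n$ and let $w:=\partial^2_{\xi\xi}v$; differentiating \eqref{vHJ} twice in the $\xi$-direction produces the Bochner-type relation
\[
\partial_t w-\Delta w-2\,Dv\cdot Dw\;=\;2\,|D\partial_\xi v|^2\;\ge\;2w^2,
\]
where the last step is Cauchy-Schwarz, since $w$ is one component of the vector $D\partial_\xi v$.

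To exploit this inequality I would linearize and dualize. Given a smooth nonnegative probability density $\alpha$ on $\R^n$ with compact support, let $\rho$ solve the backward Fokker-Planck problem
\[
-\partial_t\rho-\Delta\rho+2\,\mathrm{div}(\rho\,Dv)=0\quad\text{in }\R^n\times(0,\tau),\qquad \rho(\cdot,\tau)=\alpha,
\]
whose drift is bounded by assumption. Theorem \ref{well} then provides $\rho\ge 0$ with $\int\rho(\cdot,t)\,dx=1$ for every $t\in[0,\tau]$. Setting $f(t):=\int w(x,t)\rho(x,t)\,dx$, I pair the differential inequality above with $\rho$: using the adjoint identity and integrating by parts — legitimate because the assumed boundedness of $Du$ and the $L^2$ control of $x$-derivatives of $u$ up to fourth order force all boundary terms at infinity to vanish — yields
\[
f'(t)\;=\;\int\bigl(\partial_t w-\Delta w-2\,Dv\cdot Dw\bigr)\rho\,dx\;=\;2\int|D\partial_\xi v|^2\rho\,dx\;\ge\;2\int w^2\rho\,dx\;\ge\;2f(t)^2,
\]
where the last inequality is Jensen applied to the probability measure $\rho\,dx$.

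The differential inequality $f'\ge 2f^2$ is now decoupled from the initial data. If $f(\tau)\ge 0$ there is nothing to prove. Otherwise $f(\tau)<0$; since $f'\ge 2f^2\ge 0$ wherever $f\ge 0$, the value $f$ cannot return to negative once it has reached a nonnegative level, so $f<0$ throughout $(0,\tau]$. On this interval $-1/f>0$ satisfies $(-1/f)'=f'/f^2\ge 2$; integrating from $t_0\in(0,\tau)$ to $\tau$ gives
\[
-\frac{1}{f(\tau)}\;\ge\;-\frac{1}{f(t_0)}+2(\tau-t_0)\;\ge\;2(\tau-t_0),
\]
and letting $t_0\to 0^+$ yields $f(\tau)\ge -\tfrac{1}{2\tau}$ independently of any initial data. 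Since $\alpha$ is arbitrary, approximating the Dirac mass at any $x_0\in\R^n$ gives $\partial^2_{\xi\xi}v(x_0,\tau)\ge -\tfrac{1}{2\tau}$, which is \eqref{HamiltonEst}. Summing over an orthonormal basis of $\R^n$ then produces the trace inequality \eqref{LiYauEst}.

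The principal technical hurdle is not the ODE comparison but the analytic setup: one must supply a sufficiently regular, nonnegative dual density $\rho$ on the unbounded domain $\R^n$ whose mass is exactly preserved under a Fokker-Planck flow with bounded but nontrivial drift $-2\,Dv$ — precisely the content of Theorem \ref{well} — and one must ensure enough decay of $v$, $Dv$ and of the higher-order derivatives appearing in the computation of $f'$ for all integrations by parts to be legitimate. The regularity assumptions on $u$ stated in the theorem are calibrated exactly for this purpose, and the scale-invariant lower bound $-\tfrac{1}{2\tau}$ emerges from the ODE step without any control on the initial datum.
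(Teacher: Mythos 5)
Your argument is built on applying Theorem \ref{well} to the backward problem with drift $b=2Dv=2Du/u$, and you assert that this drift ``is bounded by assumption.'' That is not what the hypothesis gives: only $Du\in L^\infty$ is assumed, while $u$ is merely positive and may tend to $0$ at infinity -- for the Gaussian kernel, the very example for which \eqref{HamiltonEst} is sharp, $|D\log u|=|x|/(2t)$ is unbounded. So, as written, Theorem \ref{well} cannot be invoked, and even the finiteness of $f(t)=\int w\rho\,dx$ is unclear, since $D^2\log u$ involves $D^2u/u$ and $|Du|^2/u^2$. The repair is exactly the regularization the paper performs: work with $v=\log(u+\delta)$, $\delta>0$, which still solves \eqref{vHJ} because $u+\delta$ solves the heat equation, has drift $2Du/(u+\delta)$ bounded by $2\|Du\|_\infty/\delta$, and keeps the second derivatives of $v$ integrable against $\rho$ under the stated $L^2$ bounds on higher derivatives of $u$; one then lets $\delta\to0^+$ only at the very end, after the pointwise bound is obtained. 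A second, minor, point: in the weak (energy) framework for $\rho$ you should not differentiate $f(t)=\int w\rho\,dx$ pointwise in $t$ without comment; the duality identity is available in time-integrated form (as in the paper), which yields that $f$ is absolutely continuous with $f'\ge 2\int|D\partial_\xi v|^2\rho\,dx$ a.e., and this suffices for your comparison argument.

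Once these repairs are made, your route is correct but genuinely different from the paper's. The paper takes the weighted quantity $z=t^2\langle D^2v\,\xi,\xi\rangle$ (Lemma \ref{eqz}-(ii) with $\chi(t)=t^2$), tests its equation against $\rho$ over the whole strip, and closes the estimate with Cauchy--Schwarz plus Young's inequality; no differential inequality in time is ever solved, and the weight $t^2$ produces the factor $1/(2\tau)$ directly. You instead keep $\chi\equiv1$, observe that the dual pairing $f(t)=\int\langle D^2v\,\xi,\xi\rangle\rho\,dx$ satisfies the Riccati inequality $f'\ge2f^2$ thanks to Jensen's inequality with respect to the probability measure $\rho\,dx$ (this is where the conservation of mass enters), and integrate the ODE from $t_0$ to $\tau$, letting $t_0\to0^+$. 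This is a duality analogue of the classical maximum-principle proof of Li--Yau/Hamilton estimates: it gives the same sharp constant, makes transparent that the bound decouples from the initial datum, but requires the sign discussion for $f$ and the absolute-continuity justification, whereas the paper's weighted Young argument is a single integral inequality.
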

\begin{proof}
Fix $\delta>0$, and let $v=u+\delta$. Then $v$ satisfies \eqref{vHJ}. We denote, for any fixed arbitrary vector $\xi\in\R^n$, $h=\langle D^2v(x,t)\xi,\xi\rangle$ and $z=t^2h$. If we differentiate twice equation \eqref{vHJ} (see Lemma \ref{eqz}-$(ii)$ below with the choice $\chi(t)=t^2$) we find the following equation for the function $z$
\[
\partial_t z-\Delta z-2t^2|D^2v\xi|^2-2\left\langle Dv, Dz\right\rangle=2th\text{ in }\R^n\times(0,T).
\]
We now test the above equation against the solution $\rho$ solving \eqref{fpintro} with the drift $b(x,t)=2Dv$ and with the terminal datum $\rho(\tau)\in C_0^\infty(\R^n)$ with $\|\rho(\tau)\|_{L^1(\R^n)}=1$, $\rho(\tau)\geq0$. We notice that $b(x,t)=2Dv=2\frac{Du}{u+\delta}$ is globally bounded since $u$ is nonnegative and $Du$ is bounded. Hence, such a solution $\rho$ exists and we have the following properties of the solution $\rho$ (cf. Theorem \ref{well} below)
\[
\rho\geq0\text{ a.e. in $\R^n\times(0,\tau)$ and }\int_{\R^n}\rho(t)\,dx=1\text{ for all }t\in[0,\tau].
\]
We are now ready to show the main estimate. By the Cauchy-Schwarz inequality we have
\[
h^2\leq |\xi|^2|D^2v\xi|^2.
\]
This implies by duality and the Young's inequality
\begin{align*}
&\int_{\R^n}z(\tau)\rho_\tau(x)\,dx-2\int_0^\tau\int_{\R^n}t^2|D^2v\xi|^2\rho\,dxdt\\
&=\int_0^\tau\int_{\R^n}2th\rho\,dxdt\geq -2\int_0^\tau\int_{\R^n}t^2h^2\frac{\rho}{|\xi|^2}\,dxdt-\frac12\int_0^\tau\int_{\R^n}|\xi|^2\rho\,dxdt\\
&\geq -2\int_0^\tau\int_{\R^n}t^2|D^2v\xi|^2\rho\,dxdt -\frac12 |\xi|^2 \int_0^\tau 1\,dt.
\end{align*}
By rearranging the terms, we thus have
\begin{equation*}
\int_{\R^n}z(\tau)\rho_\tau(x)\,dx \geq -\tau\frac{|\xi|^2}{2} \qquad\mbox{for any }\rho(\tau)\mbox{ as above,}
\end{equation*}
which implies
\[
z(\tau)=\tau^2\langle D^2v(x,\tau)\xi,\xi\rangle\geq -\tau\frac{|\xi|^2}{2}.
\]
By the arbitrariness of $\xi\in\R^n$ and $\delta>0$, the previous inequality shows \eqref{HamiltonEst}. By taking the trace, \eqref{LiYauEst} follows easily.
\end{proof}

\textit{Outline of the paper}. In Section \ref{sec2} we introduce the geometric setting of the problem and we show the main properties needed for the dual solution $\rho$ of \eqref{fpintro}. Section \ref{sec;regular} is divided in two subSections: subSection \ref{sec;first} starts with a proof of \eqref{pest} and finishes with the logarithmic gradient bound \eqref{HSZintro}, whereas subSection \ref{sec;second} contains a proof of \eqref{LYintro} along with the reversed estimate \eqref{reversed}. Section \ref{sec;conservation} concludes the paper with the conservation of log-concavity/convexity properties of type \eqref{BLintro} for the heat flow on $\R^n$.

\section{Preliminaries: the setting}\label{sec2}

Let us fix the notations we exploit throughout the paper. In addition to this, in the following three separate subSections we discuss the tools needed for the approach we adopt.

\subsection{The geometry of the domain}\label{sec21} Let $\Omega$ be an open and connected set of $\R^n$ with smooth boundary. For $T>0$, we denote
$$
Q_T:=\Omega\times(0,T) \quad\mbox{ and }\quad \Sigma_T:=\partial\Omega\times(0,T),
$$
and $\nu$ stands for the outward normal to the boundary of $\Omega$. The generic point in $Q_T$ is denoted by $(x,t)$, and the gradient, the Hessian, and the Laplacian of a smooth function $u$ with respect to the spatial variable $x$ are respectively denoted by $Du$, $D^2u$, and $\Delta u$. Our analysis focuses on the regularizing properties of the functions $u$ solving
\begin{equation}\label{heat}
\begin{cases}
\partial_t u-\Delta u=0&\text{ in }Q_T,\\
\partial_\nu u=0&\text{ on }\Sigma_T.
\end{cases}
\end{equation}
The techniques developed in this paper work smoothly also in the case $\Omega=\R^n$ and, without further mentioning, we intend that no Neumann condition should appear in such case. The literature around Li-Yau type inequalities we outlined in the Introduction is very rich, and the techniques available have been mainly applied to two scenarios: the case with no boundary ($\Omega=\R^N$, as well as compact manifolds with no boundary) and the bounded case with homogeneous Neumann conditions on convex sets. For this reason, we decided to test our techniques on the case of convex boundaries and to provide new results by allowing the possibility of unbounded sets.\\
To this aim, the following geometric conditions will be assumed in our main results and we will mention explicitly where they are needed:
\begin{align}\label{intcone}
&\Omega\mbox{ satisfies the interior cone condition, i.e. there exists a finite cone such that every}\\ &\mbox{ point in $\Omega$ is the vertex of a cone (congruent to the fixed given cone) contained in $\Omega$;}\notag
\end{align}
\begin{equation}\label{conv}
\Omega\mbox{ is convex.}
\end{equation}
We refer to \cite[Definition 4.3]{Adams} for details about \eqref{intcone}. For our purposes, the assumption \eqref{intcone} yields the extension property for Sobolev spaces and a related apriori estimate for the parabolic equations discussed in subSection \ref{sec23}. On the other hand, the convexity assumption will come into play multiple times whenever we apply the following lemma, which can be seen as the interplay between the homogeneous Neumann condition and the nonnegativity of the principal curvatures of $\partial\Omega$: it is very well-known in the literature and we provide a short proof for the convenience of the reader. 
\begin{lemma}\label{denu}
Let $\Omega$ be a convex domain with smooth boundary $\partial\Omega\neq \emptyset$. Assume that $u$ is $C^2$-smooth around the points on $\partial \Omega$, and it satisfies $\partial_\nu u=0$ on $\partial\Omega$. Then
\[
\partial_\nu|Du|^2\leq0.
\]
\end{lemma}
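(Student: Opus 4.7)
The plan is to reduce the boundary inequality to a statement about the second fundamental form of $\partial\Omega$ and then exploit convexity.

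First I would compute the normal derivative directly: since $|Du|^2 = \sum_{i} (\partial_i u)^2$, I get
\[
\partial_\nu |Du|^2 = 2\sum_{i,j} \nu_j (\partial_j \partial_i u)(\partial_i u) = 2\,\langle D^2u \cdot \nu,\, Du\rangle.
\]
The key observation is that on $\partial\Omega$ the Neumann condition $\langle Du,\nu\rangle=0$ means that $Du$ is tangent to $\partial\Omega$ at every boundary point. So I want to control $\langle D^2u(Du),\nu\rangle$ when $Du$ is a tangent vector.

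Next I would differentiate the Neumann identity $\langle Du,\nu\rangle=0$ along the boundary. Extending $\nu$ smoothly to a neighborhood of $\partial\Omega$ as the unit outer normal field, for any tangent vector field $X$ on $\partial\Omega$ I apply $X$ to both sides to obtain
\[
\langle D^2u\cdot X,\nu\rangle + \langle Du, D_X \nu\rangle = 0 \qquad\text{on }\partial\Omega.
\]
Choosing $X = Du$ (permissible because $Du$ is tangential by the Neumann condition) gives
\[
\langle D^2u\cdot Du,\nu\rangle = -\langle D_{Du}\nu,\, Du\rangle = -\mathrm{II}(Du,Du),
\]
where $\mathrm{II}$ denotes the second fundamental form of $\partial\Omega$ with respect to the outward normal, i.e. $\mathrm{II}(X,Y) = \langle D_X\nu,Y\rangle$ for tangent $X,Y$.

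Finally I would invoke convexity: since $\Omega$ is convex, $\partial\Omega$ has nonnegative principal curvatures with respect to the outward normal, so $\mathrm{II}(X,X)\geq 0$ for every tangent vector $X$. Applied to $X=Du$ this yields
\[
\partial_\nu |Du|^2 = 2\,\langle D^2u\cdot Du,\nu\rangle = -2\,\mathrm{II}(Du,Du) \leq 0,
\]
which is the desired bound. The main (minor) obstacle is just keeping track of sign conventions for $\mathrm{II}$; once one verifies on a model case (say a ball with outward normal $\nu=x/R$, for which $D_X\nu = X/R$ on tangent $X$), the inequality follows.
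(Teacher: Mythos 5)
Your proposal is correct and follows essentially the same route as the paper: write $\partial_\nu|Du|^2=2\langle D^2u\,\nu,Du\rangle$, use the Neumann condition to see $Du$ is tangential, differentiate $\langle Du,\nu\rangle=0$ tangentially to identify this quantity with $-2\,\mathrm{II}(Du,Du)$, and conclude by convexity. The only difference is that you spell out the tangential differentiation step that the paper states implicitly, and your sign convention for $\mathrm{II}$ is handled correctly.
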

\begin{proof}
Denote by $II_x^{\partial\Omega}(\cdot,\cdot)$ the second fundamental form of $\partial \Omega$ at a point $x\in\partial\Omega$. Since $Du$ is tangent to $\partial\Omega$ thanks to the Neumann condition, one has
$$
\partial_\nu|Du|^2 (x)=2\left\langle Du , D^2u\, \nu\right\rangle = -2 II_x^{\partial\Omega}(Du,Du).
$$
The convexity ensures that $II_x^{\partial\Omega}(Du,Du)\geq 0$ and concludes the proof.
\end{proof}
We remark that for bounded sets $\Omega$, the assumption \eqref{intcone} is redundant: as a matter of fact the convexity condition \eqref{conv} always implies the interior cone condition for bounded domains. Hence, the hypothesis \eqref{intcone} is helpful for us in providing a uniform control for the geometry of unbounded sets $\Omega$ \emph{at infinity}. It is not difficult to construct classes of unbounded domains satisfying both \eqref{intcone}-\eqref{conv}. Since it is not easy to find examples of unbounded convex sets which do not satisfy \eqref{intcone}, we refer the reader to \cite[Example 3.12]{LeRiVe} for a geometric construction of such an example in $\R^3$.

\vskip 0.3cm

Throughout the paper we will perform various differentiations of the equation, and we recall that we are entitled to do this thanks to the $C^\infty$-regularity of the solutions $u$ to the heat equation. Moreover, as we are dealing with smooth sets $\Omega$, the function $u$ will be smooth up to the boundary points on $\Sigma_T$. As no ambiguity will occur, we shall exploit the notation $u(t)=u(\cdot,t)$. In the paper we consider bounded and/or $L^p$ solutions $u$ to \eqref{heat} which extend continuously up to $t=0$. It is known that this is the case of the solutions of the Cauchy problem starting at $t=0$ respectively from a $L^\infty\cap C$ and/or $L^p$ initial datum $u(0)$. Furthermore, we will require the function $u$ to satisfy the following 
\begin{equation}\label{hypgrad}
Du\in L^\infty(Q_T).
\end{equation}
In our setting, \eqref{hypgrad} is always satisfied if $u$ solves a Cauchy problem starting from a bounded and continuous initial datum at $t=-\eps$ for some positive $\eps$. All such properties are classical in the case $\Omega=\R^n$ and they are treated in several references for bounded $\Omega$ with homogeneous Neumann boundary conditions, as far as the case of unbounded convex $\Omega$ is concerned we refer the reader to the monograph \cite[Part II]{LoreBerto}, where the authors address existence, uniqueness, and gradient bounds within the framework of semigroup theory. 
As we adopt an integral approach, we will perform several integration by parts and we will test the relevant equations against various auxiliary functions depending on the derivatives of the solution $u$. That's why we end this subSection by fixing some integrability requirements for the derivatives of solutions to \eqref{heat} in order to make sense of the computations displayed in the proofs of Section \ref{sec;regular} and Section \ref{sec;conservation}. We use the standard notation $W^{k,q}(\Omega)$ for functions which are in $L^q(\Omega)$ together with their derivatives up to order $k\in\N$. It is known that the solutions of the Cauchy-Neumann problem starting from initial data in $L^2$ belong to $L^2\left(0,T;W^{1,2}(\Omega)\right)$ (see also subSection \ref{sec23} below), but we require something more. To fix the ideas (even if sometimes we need less), we will ask the function $u$ to satisfy the following
\begin{equation}\label{hypderiv}
u\in L^2\left(0,T;W^{4,2}(\Omega)\right).
\end{equation}
These type of higher regularity requirements are standard when the domain $\Omega=\R^n$, see e.g. \cite{DenkHieberPruss}, since in this case one inherits these properties from the initial datum $u(0)$ (i.e. it would be enough to ask $u(0)\in W^{3,2}(\R^n)$). On the other hand, in case of bounded and smooth $\Omega$, the solutions of the Neumann problem are smooth up to the boundary and in particular they satisfy \eqref{hypderiv}. To be more precise, when the bounded domain $\Omega$ is of class $C^2$, solutions of \eqref{heat} are globally in the class $L^2(0,T;H^2(\Omega))$ by Theorem 60 in \cite{LeoniLectures}. Higher regularity then requires more regularity on the domain and the data of the problem, cf. Theorem 70 in \cite{LeoniLectures}. In the case of unbounded sets $\Omega$, classical results establish local $H^2$ regularity via the elliptic problem, see for instance Remark 71 in \cite{LeoniLectures} and \cite{Grisvard}. For unbounded sets which are uniformly regular of class $C^2$, a result of H. Amann \cite[Corollary 15.7]{Amann}, see also \cite[Remark 51.5]{QS}, shows that for $1<q<\infty$ one has maximal $L^q$-regularity estimates
\begin{equation*}
\|u\|_{W^{1,q}(0,T;L^q(\Omega))}+\|u\|_{L^q(0,T;W^{2,q}(\Omega))}\leq C\|u(0)\|_{\mathrm{Tr}},
\end{equation*}
 where $\|\cdot\|_{\mathrm{Tr}}$ is a real interpolation space characterizing the initial datum. 
In the case of general unbounded sets with the Neumann condition, global higher regularity of order $k$ of solutions is studied in \cite[Sections 12 and 25]{Amann} and \cite[Remark 6.7]{AmannCrelle}, see also \cite[Theorems 8.1 and 8.2]{JLLions}, when $\Omega$ is uniformly regular of class $C^k$. Alternatively, Theorem $1_k$ p. 149 of \cite{Browder} established elliptic global estimates for general boundary value problems in the class $W^{2+k,p}(\Omega)$, $k\in\N$, when $\Omega$ is uniformly regular of class $C^{2+k}$. Following then Remark 71 of \cite{LeoniLectures} and using that $\partial_t u=\Delta u$ has the right summability properties, by freezing the time variable we can treat the heat equation as a Poisson equation and conclude the desired integrability uniformly with respect to each $t$. The previous discussion indicates that the assumption \eqref{hypderiv} hides the following condition
\begin{equation*}
\Omega \mbox{ uniformly regular of class }C^4,
\end{equation*}
which is a condition concerning the geometry of smooth unbounded sets $\Omega$ \emph{at infinity} (for the precise definition we refer to \cite{Amann,  Leoni,QS}). The latter condition is, of course, stronger than \eqref{intcone}. However, we point out that the assumptions \eqref{intcone}-\eqref{conv} are exploited in the estimates (in this respect they are quantitative), whereas the assumptions \eqref{hypgrad}-\eqref{hypderiv} are qualitative (in the spirit of a-priori estimates, the sizes of the functional norm involved do not show up in the derivation of the results). In order to get rid of \eqref{hypgrad}-\eqref{hypderiv}, one should perform either a localization or an approximation procedure which is outside of the scope of the global approach carried in this paper and will be the object of a future investigation.

\subsection{Log-solutions and the linearized problem}\label{sec22}
By letting either $v=\log u$ or $v=\log( u + \delta )$ (for some $\delta>0$) where $u$ is a positive solution to \eqref{heat}, we recognize that the pressure function $v$ is a solution to the Neumann problem for the viscous Hamilton-Jacobi equation  
\begin{equation}\label{hjn}
\begin{cases}
\partial_t v-\Delta v -|Dv|^2=0&\text{ in }Q_T,\\
\partial_\nu v=0&\text{ on }\Sigma_T.
\end{cases}
\end{equation}
In the following lemma we explicit the drift-diffusion partial differential equations solved by first and second derivatives of $v$. The proof is completely algebraic and we provide the details for the sake of completeness. 
\begin{lemma}\label{eqz}
Let $v$ be a smooth solution to $\partial_t v-\Delta v -|Dv|^2=0$ in $Q_T$, and let $\chi:(0,T)\to\R^+$ be a $C^1$-function. Then
\begin{itemize}
\item[(i)] the functions $\omega(x,t)=|Dv(x,t)|^2$ and $g(x,t)=\chi(t) \omega$ solve respectively
\[
\partial_t\omega-\Delta \omega+2|D^2v|^2-2\left\langle Dv, D\omega\right\rangle=0\quad\text{ in }Q_T
\]
and
\[
\partial_tg-\Delta g+2\chi(t)|D^2v|^2-2\left\langle Dv, Dg\right\rangle=\chi'(t)\omega\quad\text{ in }Q_T;
\]
\item[(ii)] denoting by $h(x,t)=\left\langle D^2v(x,t)\xi,\xi\right\rangle$ for any fixed $\xi\in\R^n$, the function $z=\chi(t)h$ solves
\[
\partial_t z-\Delta z-2\chi(t)\left|D^2v \xi\right|^2-2\left\langle Dv, Dz\right\rangle=\chi'(t)h\quad\text{ in }Q_T.
\]
As a consequence, $w=\chi(t)\Delta v$ solves
\[
\partial_t w-\Delta w-2\chi(t)|D^2v|^2-2 \left\langle Dv, Dw\right\rangle=\chi'(t)\Delta v\quad\text{ in }Q_T.
\]
\end{itemize}
\end{lemma}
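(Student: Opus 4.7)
The plan is to prove both items by direct differentiation of the equation $\partial_t v - \Delta v - |Dv|^2 = 0$, using the standard Bochner-type identity $\Delta(|Dv|^2) = 2|D^2v|^2 + 2\langle Dv, D\Delta v\rangle$ and the product rule in time for the $\chi(t)$-dependence.

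For item (i), I would first treat the case $\chi \equiv 1$, i.e., $\omega = |Dv|^2$. Differentiating the HJ equation in $x_i$ gives $\partial_t v_i - \Delta v_i - 2\langle Dv, Dv_i\rangle = 0$. Multiplying by $2 v_i$ and summing over $i$ produces
\[
\partial_t \omega - 2\sum_i v_i \Delta v_i - 2\langle Dv, D\omega\rangle = 0.
\]
Then I would rewrite $2\sum_i v_i \Delta v_i = \Delta \omega - 2|D^2 v|^2$ via Bochner, obtaining the stated equation for $\omega$. The equation for $g = \chi(t)\omega$ follows immediately from $\partial_t g = \chi'(t)\omega + \chi(t)\partial_t \omega$, $\Delta g = \chi(t)\Delta\omega$, and $Dg = \chi(t)D\omega$, after substituting the equation for $\omega$ and rearranging.

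For item (ii), fix $\xi \in \R^n$ and set $h = \langle D^2 v\, \xi, \xi\rangle = \partial_{\xi\xi} v$. Differentiating the HJ equation once in direction $\xi$ yields $\partial_t v_\xi - \Delta v_\xi - 2\langle Dv, Dv_\xi\rangle = 0$, and differentiating again in $\xi$ gives
\[
\partial_t h - \Delta h - 2|D v_\xi|^2 - 2\langle Dv, Dh\rangle = 0,
\]
where I use that the mixed terms produced by differentiating $2\langle Dv, Dv_\xi\rangle$ assemble into $2|Dv_\xi|^2 + 2\langle Dv, Dh\rangle$, and that $|Dv_\xi|^2 = |D^2 v\,\xi|^2$. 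The equation for $z = \chi(t) h$ follows exactly as in item (i) by the same product-rule argument. Finally, the equation for $w = \chi(t)\Delta v$ is obtained by applying the identity just derived with $\xi = e_i$, summing over $i = 1,\dots,n$, and using $\sum_i \langle D^2 v\, e_i, e_i\rangle = \Delta v$ together with $\sum_i |D^2 v\, e_i|^2 = |D^2 v|^2$ (the squared Frobenius norm).

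There is no real obstacle here: the statement is a bookkeeping exercise and the only point requiring a bit of care is the Bochner identity (which handles the commutator between $\Delta$ and squaring the gradient) and keeping track of the signs produced when differentiating the quadratic nonlinearity $|Dv|^2$ twice. Once these two computational moves are in place, all four displayed equations follow by the same scheme, so it suffices to write out in full one prototype (for instance the equation for $\omega$) and indicate that the remaining three are obtained either by multiplying by $\chi(t)$ and applying the product rule in time, or by taking a trace over an orthonormal basis.
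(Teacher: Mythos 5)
Your proposal is correct and follows essentially the same route as the paper: differentiate the Hamilton--Jacobi equation, use the Bochner identity for part (i), contract the twice-differentiated equation with $\xi$ for part (ii), handle the factor $\chi(t)$ by the time product rule, and obtain the equation for $w$ by tracing over an orthonormal basis. The only cosmetic difference is that in (i) the paper substitutes the equation for $\Delta v$ directly into Bochner's identity rather than differentiating once and multiplying by $2v_i$, which is the same computation rearranged.
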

\begin{proof}
The first assertion follows once we plug into the renowned Bochner's identity
\[
\Delta \omega=2|D^2v|^2+2\left\langle Dv, D\left(\Delta v\right)\right\rangle
\]
the equation solved by $\Delta v$ coming from \eqref{hjn}, and we recognize that $2\left\langle Dv, D\left(\partial_t v\right)\right\rangle=\partial_t \omega$. The equation solved by $g$ can be found by multiplying the one for $\omega$ by $\chi$ and using the product rule for the time derivative. This shows the validity of (i).\\
We now discuss (ii). We first find the equation satisfied by $\tilde{h}=v_{x_ix_j}$ by differentiating twice the PDE in \eqref{hjn}, for some fixed $i,j\in\{1,\ldots,n\}$. We thus obtain
\[
\partial_t \tilde{h}-\Delta \tilde{h}-2\sum_{k=1}^n v_{x_ix_k}v_{x_jx_k}-2\sum_{k=1}^nv_{x_k}v_{x_jx_k,x_i}=0.
\]
After having fixed $\xi\in\R^n$, we can take $h$ as in the statement. We then multiply the previous equation solved by $\tilde{h}$ by $\xi_i\xi_j$ and we sum over the indices $i,j$: we discover that $h$ solves
\[
\partial_t h-\Delta h-2\sum_{k=1}^n\left(\sum_{i=1}^n v_{x_kx_i}\xi_i\right)^2-2\left\langle Dv, Dh\right\rangle=0.
\]
We finally multiply the previous equation by $\chi$ and set $z=\chi h$ to find
\[
\partial_t z-\Delta z-2\chi\left|D^2v \xi\right|^2-2\left\langle Dv, Dz\right\rangle=\chi' h.
\]
The equation for $w$ can be derived by rewriting the previous identity by letting $\xi=e_l$ (i.e. the $l$-th vector of the canonical basis) in the definition of $h$, and by summing up over the index $l$.
\end{proof}

\subsection{Well-posedness and $L^1$-stability results for advection-diffusion equations}\label{sec23}

The equations discussed in the previous subSection lead to the the partial differential equations who sit at the center stage of our analysis: the adjoint equation of the linearization of \eqref{hjn}. That is why we now deal with the needed results for the following backward Cauchy-Neumann problem solved by the solution $\rho=\rho(x,t)$ to
\begin{equation}\label{fp}
\begin{cases}
-\partial_t \rho-\Delta \rho+\mathrm{div}(b(x,t)\rho)=0&\text{ in }Q_\tau,\\
\partial_\nu \rho-\rho \left\langle b(x,t), \nu\right\rangle=0&\text{ on }\Sigma_\tau,\\
\rho(\tau)=\rho_\tau&\text{ in }\Omega=\Omega\times\{\tau\}.
\end{cases}
\end{equation}

Here $\tau$ is taken in the interval $(0,T)$, $\mathrm{div}$ stands for the divergence with respect to the $x$-variable, and for our purposes $\rho_\tau$ is always considered a $C_0^\infty(\Omega)$-function (without further mentioning). The key role is played by the vector-valued function $b$ which we assume to be
$$
b\in L^\infty(Q_\tau).
$$
In what follows we shall exploit the properties of the solutions to \eqref{fp} for vector fields $b$ which are also locally smooth up to the boundary of $\Omega$ and the boundary values will be taken continuously, but for the purpose of this section we might think that the boundary values are taken in a weak sense. As a matter of fact, it is convenient for us to work in the Sobolev-space framework, or more precisely in the framework of variational evolution equations. To this aim, we denote by $W'_\Omega$ the dual space of $W^{1,2}(\Omega)$, and by $\left\langle \cdot,\cdot \right\rangle_{W^{1,2}(\Omega)-W'_\Omega}$ the pairing of such duality. We also set
\begin{equation}\label{defW}
W:=\left\{f\in L^2(0,\tau;W^{1,2}(\Omega))\,\mbox{ such that }\,\partial_t f\in L^2(0,\tau;W'_\Omega)\right\}.
\end{equation}
Let us consider weak energy solutions to \eqref{fp} according to the following
\begin{defn}
A function $\rho\in W$ is a weak solution to \eqref{fp} if 
\begin{align*}
&\iint_{Q_\tau}\rho(x,t)\psi(x)\chi'(t)\, dxdt + \iint_{Q_\tau} \left\langle D\rho(x,t)-\rho(x,t)b(x,t),D\psi(x)\right\rangle \chi(t)\, dxdt\\
&=\chi(\tau)\int_{\Omega}\rho_\tau(x)\psi(x)\,dx
\end{align*}
for all $\psi\in W^{1,2}(\Omega)$ and $\chi\in C_0^\infty((0,\tau])$.
\end{defn}
A comprehensive treatment for the existence, uniqueness, and regularity theory of the equation under discussion can be found in the monographs \cite{BKRS,DautrayLions,LSU} under various assumptions on the state space $\Omega$ and on the drift vector field. As far as estimates for the solutions to Cauchy-Neumann problems are concerned we refer the reader to \cite{Davies}, and for unbounded domains with Sobolev-extension properties to the works \cite{ArTer, Daners}. In the following theorem we gather the properties of the solutions $\rho$ we will exploit in the next sections: we stress that most of these results are contained in the reference \cite{Daners}, which cover the case of unbounded domains and time-dependent $L^\infty$-drifts.

\begin{thm}\label{well}
Assume that $\Omega$ satisfies \eqref{intcone}, and let $b\in L^\infty(Q_\tau)$. Then there exists a unique weak energy solution $\rho\in W$ to \eqref{fp}. In addition, if $\rho_\tau\geq0$, then $\rho(t)\geq0$ and we have
$$
\int_{\Omega}\rho(x,t)\,dx=\int_\Omega\rho_\tau(x)\,dx
$$
for $t\in[0,\tau)$.
\end{thm}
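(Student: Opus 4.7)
The plan is to tackle the three claims --- well-posedness in $W$, nonnegativity, and $L^1$-mass conservation --- sequentially, with the first two being standard and the last constituting the novel contribution. I first perform the time reversal $s := \tau - t$, setting $\tilde\rho(x,s) := \rho(x,\tau - s)$ and $\tilde b(x,s) := b(x,\tau - s)$, which turns \eqref{fp} into a forward parabolic problem with the Robin-type condition $\partial_\nu \tilde\rho = (\tilde b \cdot \nu)\tilde\rho$ and initial datum $\tilde\rho(0) = \rho_\tau \in C_c^\infty(\Omega)$. The associated bilinear form
\[
a(u,v) = \int_\Omega \langle Du, Dv\rangle\, dx - \int_\Omega u\,(\tilde b \cdot Dv)\, dx
\]
is continuous on $W^{1,2}(\Omega)$, and Young's inequality applied to the drift term yields a Garding inequality. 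Existence and uniqueness of the weak solution in $W$ then follow from the standard Lions-Magenes variational theory in the framework of \cite{Daners}, which accommodates unbounded domains with the Sobolev extension property ensured by \eqref{intcone}.

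For nonnegativity, I would test the forward equation against $\phi := -\tilde\rho^- \in L^2(0,\tau;W^{1,2}(\Omega))$, with $\tilde\rho^- = \max(-\tilde\rho,0)$. The Robin condition precisely cancels the boundary trace arising from the integration by parts, and Cauchy-Schwarz combined with Young's inequality absorbs the drift term into the diffusive contribution, leading to
\[
\frac{d}{ds}\|\tilde\rho^-(s)\|_{L^2(\Omega)}^2 \leq \|b\|_\infty^2\, \|\tilde\rho^-(s)\|_{L^2(\Omega)}^2.
\]
Since $\tilde\rho^-(0) \equiv 0$ by $\rho_\tau \geq 0$, Gronwall forces $\tilde\rho^- \equiv 0$, i.e., $\rho \geq 0$.

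For the $L^1$-conservation, formally one tests the weak formulation against $\psi \equiv 1$: all the gradient terms vanish and the identity $\int_\Omega \rho(t) = \int_\Omega \rho_\tau$ is read off. This works verbatim on bounded $\Omega$ since $1 \in W^{1,2}(\Omega)$. On unbounded $\Omega$, I would approximate with smooth radial cutoffs $\eta_R \in C_c^\infty(\R^n)$ such that $\eta_R \equiv 1$ on $B_R$, $\mathrm{supp}\,\eta_R \subset B_{2R}$ and $|D\eta_R|\leq C/R$. Testing the weak formulation with $\psi = \eta_R$ and letting $\chi$ approximate $\mathbf{1}_{[t,\tau]}$ yields
\[
\int_\Omega \rho(t)\,\eta_R\, dx - \int_\Omega \rho_\tau\,\eta_R\, dx = -\int_t^\tau\!\!\int_\Omega \langle D\rho - \rho\, b,\, D\eta_R\rangle\, dx\, ds,
\]
and the left-hand side converges to $\int_\Omega(\rho(t) - \rho_\tau)\,dx$ by monotone convergence, thanks to the nonnegativity just proved.

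The main obstacle is showing that the right-hand side vanishes as $R\to\infty$. A naive Cauchy-Schwarz using only $\rho, D\rho \in L^2(Q_\tau)$ incurs a factor $\|D\eta_R\|_{L^2(A_R)} \sim R^{(n-2)/2}$ on the annulus $A_R = B_{2R}\setminus B_R$, which does not close. To overcome this, I would establish improved spatial decay of $\rho$ by a weighted energy estimate: since $\rho_\tau$ is compactly supported and $\tilde b \in L^\infty$, testing the forward equation against $\tilde\rho\,(1+|x|^2)^k$ or against $\tilde\rho\, e^{\alpha\sqrt{1+|x|^2}}$ propagates polynomial or exponential moments in time, yielding uniform-in-$s$ decay of $\tilde\rho(\cdot,s)$ at infinity. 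Interior parabolic regularity then transfers this decay to $D\tilde\rho$, providing the $o(R^{-(n-2)/2})$ control on $\|(\rho,D\rho)\|_{L^2(A_R\times(t,\tau))}$ needed to close the passage to the limit. This weighted-moment step is the technical crux of the unbounded case.
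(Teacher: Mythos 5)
Your overall architecture (quote standard variational theory for existence/uniqueness and positivity, then prove conservation of mass by testing with cutoffs and showing the exterior flux term vanishes) matches the paper's, but your treatment of the crux \eqref{claimiamo}-type step is genuinely different. The paper never uses unbounded weights: it tests the equation with $\theta_R\rho$, introduces the exterior energies $H_R(\sigma)=\int_{\Omega\smallsetminus B_R}\rho^2(\sigma)+\int_\sigma^\tau\int_{\Omega\smallsetminus B_R}|D\rho|^2$, derives the recursive inequality \eqref{stimdelta} linking $H_R$ to $H_{R-1}$, and iterates (\`a la Gu\v{s}\v{c}in \cite{Gus}) to get the factorial decay \eqref{wgus}, which beats the $R^{n}$ loss from Cauchy--Schwarz; the drift contribution is handled separately via the uniform bounds \eqref{unifdan} from \cite{Daners} and dominated convergence. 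You instead propose a weighted (moment) energy estimate exploiting the compact support of $\rho_\tau$ and $b\in L^\infty$: testing with $\tilde\rho\,e^{\alpha\sqrt{1+|x|^2}}$ and using $|D(\alpha\sqrt{1+|x|^2})|\le\alpha$ gives, via Young and Gronwall, $\sup_s\int\tilde\rho^2(s)e^{\alpha\langle x\rangle}+\iint|D\tilde\rho|^2e^{\alpha\langle x\rangle}\le e^{C(\alpha,\|b\|_\infty)\tau}\int\rho_\tau^2e^{\alpha\langle x\rangle}<\infty$, which yields exponential decay of the exterior energy and closes the limit for both the gradient and the drift terms. Both routes ultimately encode the same finite-propagation-type decay; yours is perhaps more transparent, while the paper's avoids unbounded test functions and leans on the ready-made estimates of \cite{Daners}.

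Two repairs are needed to make your sketch rigorous. First, $\tilde\rho\,e^{\alpha\sqrt{1+|x|^2}}$ is not an admissible test function a priori (it need not lie in $L^2(0,\tau;W^{1,2}(\Omega))$); you must run the estimate with truncated weights $w_N=e^{\min(\alpha\sqrt{1+|x|^2},\,N)}$, which still satisfy $|Dw_N|\le\alpha w_N$, obtain constants independent of $N$, and pass to the limit by monotone convergence. Second, the detour through ``interior parabolic regularity'' to control $D\tilde\rho$ does not work in the relevant generality: for unbounded $\Omega$ with boundary extending to infinity, the annuli $A_R$ meet $\partial\Omega$, and interior estimates say nothing about $D\rho$ there, while up-to-the-boundary gradient regularity is not free for a conormal problem with merely bounded $b$. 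Fortunately this step is also unnecessary: the dissipation term $\iint|D\tilde\rho|^2 w_N$ in your own weighted energy inequality already provides the exponentially weighted control of $D\rho$, so you should extract the gradient decay from there rather than from regularity theory. With these adjustments your argument is complete and correct.
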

\begin{proof}
For the existence and uniqueness of the weak solution $\rho$ belonging in $W$ we refer to \cite[Theorem 2.4]{Daners} (see also Theorem III.5.1 p.170 or Theorem III.5.2 p.171 in \cite{LSU}, and Section XVIII.3.3 in \cite{DautrayLions}). The nonnegativity of $\rho$ under the assumption $\rho_\tau\geq 0$ follows from \cite[Section 8]{Daners} (see also the uniqueness statement in \cite[Theorem III.5.2]{LSU}). We are left with the proof of the conservation of mass property. To this aim, we fix $\rho_\tau\geq 0$ in $C_0^\infty(\Omega)$ with support contained in the ball centered at the origin $B_{R_0}$, and we fix $t\in (0,\tau)$. For $R>1$ let us take a cut-off function $\psi_R(x)=1-\theta_R(x)$ where
$$
\theta_R(x)=\begin{cases}
1\qquad&\text{ if }|x|\geq R,\\
0\qquad&\text{ if }|x|\leq R-1,\\
|x|-R+1&\text{ if }R-1<|x|<R.
\end{cases}
$$
Then, exploiting the fact that $\rho$ solves \eqref{fp} in the weak sense, we have
\begin{align}\label{massdaqui}
&\int_\Omega \psi_R(x) \rho_\tau(x)\,dx - \int_\Omega \psi_R(x) \rho(x,t)\,dx =\int_t^\tau \left\langle \partial_t \rho(s), \psi_R\right\rangle_{W^{1,2}(\Omega)-W'_\Omega}\,ds\\
&=\int_t^\tau\int_\Omega \left\langle D\psi_R(x), D\rho(x,s)-\rho(x,s)b(x,s)\right\rangle\,dxds,\notag
\end{align}
where the first equality is justified by \cite[Theorem 2 of Section XVIII.1.2]{DautrayLions} (keeping in mind that $\rho\in C(0,\tau;L^{2}(\Omega))$).
Since $\psi_R$ is converging pointwise to the constant $1$ and it is monotonically increasing with respect to $R$, we easily obtain by Beppo Levi's theorem that
$$\int_\Omega \psi_R(x) \rho_\tau(x)\,dx\longrightarrow \int_\Omega\rho_\tau(x)\,dx<\infty \,\,\,\mbox{ as }R\to \infty$$
and
$$\int_\Omega \psi_R(x) \rho(x,t)\,dx\longrightarrow \int_\Omega\rho(x,t)\,dx \,\,\,\mbox{ as }R\to \infty.$$
Hence, the proof will be complete once we show that the right hand side of \eqref{massdaqui} vanishes as $R\to \infty$. In order to do so, we recall the estimates proved in \cite[Section 7]{Daners} (see the case $d=N$ in \cite[Section 6 and (7.2)]{Daners}) which ensure that $\rho\in L^\infty(0,\tau;L^1(\Omega))$ and $\rho\in L^\infty(0,\tau;L^2(\Omega))$: more precisely, we have the existence of positive constants $c_1(\tau)$ and $c_2(\tau)$ such that
\begin{equation}\label{unifdan}
\|\rho(\sigma)\|_{L^1(\Omega)}\leq c_1(\tau)\|\rho_\tau\|_{L^1(\Omega)}\mbox{ and }\|\rho(\sigma)\|^2_{L^2(\Omega)}\leq c_2(\tau)\|\rho_\tau\|^2_{L^2(\Omega)}\mbox{ for every }\sigma\in(0,\tau).
\end{equation}
Since $b\in L^\infty$, $\|D\psi_R\|_\infty\leq 1$, and $D\psi_R(x)\equiv 0$ in $B_{R-1}$, we obtain
$$
\left |\int_t^\tau\int_\Omega \rho(x,s)\left\langle D\psi_R(x), b(x,s)\right\rangle\,dxds \right|\leq \|b\|_{\infty} \int_t^\tau\int_{\Omega\smallsetminus B_{R-1}} \rho(x,s)\,dxds\longrightarrow 0
$$
as $R\to \infty$, where the limiting behavior is a consequence of the first inequality in \eqref{unifdan} and of a direct application of the dominated convergence theorem. On the other hand, we have
\begin{align*}
\left |\int_t^\tau\int_\Omega \left\langle D\psi_R(x), D\rho(x,s)\right\rangle\,dxds \right|^2&\leq  \int_t^\tau\int_\Omega |D\psi_R(x)|^2 \,dxds \int_t^\tau\int_{\Omega \smallsetminus B_{R-1}} |D\rho(x,s)|^2 \,dxds\\
&\leq \tau |B_1| R^n \int_t^\tau\int_{\Omega \smallsetminus B_{R-1}} |D\rho(x,s)|^2 \,dxds.
\end{align*}
We claim that
\begin{equation}\label{claimiamo}
R^n \int_t^\tau\int_{\Omega \smallsetminus B_{R}} |D\rho(x,s)|^2 \,dxds\longrightarrow 0 \,\,\,\mbox{ as }R\to \infty.
\end{equation}
It is clear from the above discussion that the validity of the claim \eqref{claimiamo} will finish the proof of the desired statement. For an arbitrary $\sigma\in(0,\tau)$, let us introduce the function
$$
H_R(\sigma):=\int_{\Omega \smallsetminus B_{R}}\rho^2(x,\sigma)\, dx+\int_\sigma^\tau\int_{\Omega \smallsetminus B_{R}} |D\rho(x,s)|^2 \,dxds.
$$
We notice that $H_R(\sigma)$ is decreasing as a function of $R$ and thus
$$
H_R(\sigma)\leq H_0(\sigma):=\int_{\Omega}\rho^2(x,\sigma)\, dx+\int_\sigma^\tau\int_{\Omega } |D\rho(x,s)|^2 \,dxds.
$$
Since we are allowed to test the solution $\rho$ against itself in the weak formulation (see, e.g., the proof of \cite[Lemma 3.3]{Daners}) we get the identity
$$
\frac{1}{2}\int_{\Omega}\rho_\tau^2(x)\, dx-\frac{1}{2}\int_{\Omega}\rho^2(x,\sigma)\, dx=\int_\sigma^\tau\int_{\Omega } \left\langle D\rho(x,s), D\rho(x,s)-\rho(x,s)b(x,s)\right\rangle \,dxds,
$$
from which we deduce that
\begin{align*}
\frac{1}{2}H_0(\sigma)&=\frac{1}{2}\int_{\Omega}\rho_\tau^2(x)\, dx-\frac{1}{2}\int_\sigma^\tau\int_{\Omega } |D\rho(x,s)|^2 \,dxds+\int_\sigma^\tau\int_{\Omega } \rho(x,s)\left\langle D\rho(x,s),b(x,s)\right\rangle \,dxds\\
&\leq \frac{1}{2}\int_{\Omega}\rho_\tau^2(x)\, dx-\frac{1}{2}\int_\sigma^\tau\int_{\Omega } |D\rho(x,s)|^2 \,dxds+\|b\|_\infty\int_\sigma^\tau\int_{\Omega } \rho(x,s)|D\rho(x,s)| \,dxds\\
&\leq\frac{1}{2}\int_{\Omega}\rho_\tau^2(x)\, dx+\frac{1}{2}\|b\|^2_\infty\int_\sigma^\tau\int_{\Omega } \rho^2(x,s) \,dxds,
\end{align*}
where in the last step we used Young's inequality. Hence, the second inequality in \eqref{unifdan} yields
\begin{equation}\label{bzero}
H_0(\sigma)\leq (1+\|b\|^2_\infty c_2(\tau)\tau)\|\rho_\tau\|^2_{L^2(\Omega)}=:C\|\rho_\tau\|^2_{L^2(\Omega)} \mbox{ for every }\sigma\in(0,\tau).
\end{equation}
If we now take $R\geq R_0+1$ and we test the equation for $\rho$ with $\theta_R \rho$ we obtain 
\begin{align*}
0=\frac{1}{2}\int_{\Omega}\theta_R(x)\rho_\tau^2(x)\, dx=\frac{1}{2}\int_{\Omega}&\theta_R(x)\rho^2(x,\sigma)\, dx + \int_\sigma^\tau\int_{\Omega } \left\langle D\left(\theta_R(x)\rho(x,s)\right), D\rho(x,s)\right\rangle \,dxds +\\
&- \int_\sigma^\tau\int_{\Omega } \rho(x,s) \left\langle D\left(\theta_R(x)\rho(x,s)\right),b(x,s)\right\rangle \,dxds,
\end{align*}
which implies that
\begin{align*}
&\frac{1}{2}\int_{\Omega\smallsetminus B_R}\rho^2(x,\sigma)\, dx + \int_\sigma^\tau\int_{\Omega\smallsetminus B_R } |D\rho(x,s)|^2 \,dxds \\
&\leq\frac{1}{2}\int_{\Omega}\theta_R(x)\rho^2(x,\sigma)\, dx + \int_\sigma^\tau\int_{\Omega } \theta_R(x) |D\rho(x,s)|^2 \,dxds\\
&=\int_\sigma^\tau\int_{\Omega\smallsetminus B_{R-1} } \rho(x,s)\left\langle \theta_R(x)b(x,s)-D\theta_R(x), D\rho(x,s)\right\rangle \,dxds \\
&+ \int_\sigma^\tau\int_{\Omega\smallsetminus B_{R-1} } \rho^2(x,s) \left\langle D\theta_R(x),b(x,s)\right\rangle \,dxds\\
&\leq (1+\|b\|_\infty)\int_\sigma^\tau\int_{\Omega\smallsetminus B_{R-1} } \rho(x,s)|D\rho(x,s)| \,dxds + \|b\|_\infty \int_\sigma^\tau\int_{\Omega\smallsetminus B_{R-1}} \rho^2(x,s) \,dxds.
\end{align*}
Recalling the definition of $H_R$ and using Young's inequality, we deduce that for any $\delta>0$
\begin{align}\label{stimdelta}
H_R(\sigma)&\leq \delta (1+\|b\|_\infty) \int_\sigma^\tau\int_{\Omega\smallsetminus B_{R-1} } |D\rho(x,s)|^2 \,dxds + \notag\\
&+\left(\frac{1+\|b\|_\infty}{\delta}+ 2\|b\|_\infty\right)\int_\sigma^\tau\int_{\Omega\smallsetminus B_{R-1}} \rho^2(x,s) \,dxds \notag\\
&\leq \delta (1+\|b\|_\infty)H_{R-1}(\sigma) + \left(\frac{1+\|b\|_\infty}{\delta}+ 2\|b\|_\infty\right)\int_\sigma^\tau H_{R-1}(s)\,ds.
\end{align}
The estimate \eqref{stimdelta}, together with \eqref{bzero}, implies the following: there exists a positive constant $c(\tau)$ such that the following bound holds true
\begin{align}\label{wgus}
&H_R(\sigma)\leq \frac{c^k(\tau) (\tau-\sigma)^{\frac{k}{2}}}{\sqrt{(k+1)!}} \|\rho_\tau\|^2_{L^2(\Omega)}\\
\mbox{ for every }\sigma\in (0,\tau)&\mbox{ for every }R\in [R_0+k,R_0+k+1)\mbox{ for every } k\in\N.\notag
\end{align}
As a matter of fact, one can argue by induction over $k\in\N$ by showing \eqref{wgus} with the choice $c(\tau)=2\sqrt{2}\left(1+(1+\sqrt{\tau})\|b\|_\infty\right)\max\{1,C\}$ where $C$ is the constant in \eqref{bzero}. The case $k=1$ comes from plugging \eqref{bzero} into the estimate \eqref{stimdelta} with $\delta=\sqrt{\tau-\sigma}$ which yields, for $R\geq R_0+1$,
\begin{align*}
H_R(\sigma)&\leq H_{R_0+1}(\sigma)\leq 2C\sqrt{\tau-\sigma}\|\rho_\tau\|^2_{L^2(\Omega)}\left[1+\|b\|_\infty+ \|b\|_\infty\sqrt{\tau-\sigma}\right]\\
&\leq 2C\left(1+(1+\sqrt{\tau})\|b\|_\infty\right)\sqrt{\tau-\sigma}\|\rho_\tau\|^2_{L^2(\Omega)}.
\end{align*}
On the other hand, if we assume that \eqref{wgus} holds for some $k\in\N$, in order to show \eqref{wgus} for $k+1$ we can plug, for $R\geq R_0+k+1$, the inductive hypothesis into the estimate \eqref{stimdelta} with $\delta=\frac{\sqrt{2}\sqrt{\tau-\sigma}}{\sqrt{k+2}}$. This shows \eqref{wgus}. From \eqref{wgus} we easily deduce
$$
H_R(t)\leq \frac{c^k(\tau) (\max\{1,\sqrt{\tau}\})^{k}}{\sqrt{(k+1)!}} \|\rho_\tau\|^2_{L^2(\Omega)} \mbox{ for every $k\in\N$ such that }R-R_0-1<k\leq R-R_0,
$$
which implies
$$
R^n H_R(t) \longrightarrow 0 \qquad\mbox{ as }R\to\infty.
$$
The previous limiting behavior concludes the proof of the claim \eqref{claimiamo}, and therefore the proof of the theorem.
\end{proof}

One can notice that the focus of the previous proof is on the conservation of mass property. We stress that such property is straightforward in the case of bounded $\Omega$ (in this scenario it is enough to use the test function identically equal to 1). On the other hand, in the case $\Omega=\R^n$ one can follow different paths such as the uniqueness of the constant solution $1$ for the adjoint equation (see the treatments in \cite{DK,PorEid} and also \cite{LanconelliPascucci} for a Gaussian-bound approach). For the general case of unbounded $\Omega$ with Neumann conditions, the strategy we follow in the above proof is heavily inspired by \cite[Corollary 1]{Gus}, but it exploits in a crucial way the uniform estimates in \cite{Daners}.

\begin{rem}\label{contr}
In the sequel, what we really need is that
$$\int_\Omega\rho_\tau(x)\,dx\leq 1\,\,\,\, \Longrightarrow \,\,\,\, \int_\Omega\rho(x,t)\,dx\leq 1\,\,\,\,\mbox{ for all $t\in[0,\tau)$.}$$ This is in general a consequence of $L^1$ contraction estimates, cf. e.g. \cite[estimate (3.12)]{PorrUMI} or Proposition 3.7 in \cite{PorrARMA} (note that they hold under much weaker integrability conditions on the velocity field). The $L^p$ contractivity is in general false for equations with divergence-type terms (the estimate in general depends on appropriate norms of the drift), while it is true for the heat equation. More precisely, for any $p\geq 1$, one has
\begin{align}\label{Lpcontr}
\mbox{if $\rho$ is the weak energy solution to \eqref{fp}} & \mbox{ with $b\equiv 0$ and $\rho_\tau\geq 0$, then}\notag\\
\int_\Omega\rho^p(x,t)\,dx \leq \int_\Omega\rho^p_\tau(x)\,dx & \mbox{ for all $t\in[0,\tau)$.}
\end{align}
The property \eqref{Lpcontr} will be exploited in Theorem \ref{lip0} below, and we notice that the assumption \eqref{intcone} is not needed for the validity of this property. A proof of the fact that the linear map $\rho_\tau\mapsto \rho(t)$ is a contraction in $L^p(\Omega)$ for the Cauchy-Neumann solutions to the heat equation can be in fact found in \cite[Corollary 1]{Gus} for $p=1$ (or by reviewing the above proof), it can be showed by maximum principle (or by duality) for $p=\infty$, and by making use of the Riesz-Thorin interpolation theorem for generic $p$ (alternatively one can invoke \cite[Proposition 7.4 with $\delta_0=0$]{Daners} under the right assumptions on $\Omega$).\\
The above proof of the conservation of mass holds for more general equations of the form $-\partial_t\rho-\mathrm{div}(A(x,t)D\rho)+\mathrm{div}(b(x,t)\rho)=0$ with $A$ uniform elliptic with bounded and measurable entries. We also expect the validity of the conservation of mass property under weaker assumptions on the drift, at least in the whole space $\R^n$. For instance, one might assume the drift $b$ to be locally in the so-called Aronson-Serrin-Ladyzhenskaya regime $L^Q_t(L^P_x)$ with $P,Q$ satisfying
\[
\frac{n}{2P}+\frac{1}{Q}<\frac12,
\]
cf, \cite[p.169]{PorEid}, combined with suitable conditions at infinity, cf. \cite{DK}. We are not aware of similar studies for problems on unbounded domains equipped with the Neumann condition.
\end{rem}

\section{Regularizing effects}\label{sec;regular}
\subsection{First-order smoothing bounds}\label{sec;first}
As a starting point, we consider the following classical estimate
\[
\|Du(t)\|_{L^\infty(\R^n)}\leq \frac{C_n}{\sqrt{t}}\|u(0)\|_{L^\infty(\R^n)},\ C_n=\frac{\Gamma\left(\frac{n+1}{2}\right)}{\Gamma\left(\frac{n}{2}\right)},
\]
which is a consequence of the explicit computation of the gradient of the fundamental solution of the heat equation in $\R^n$. In our first result we look for similar $L^\infty(\Omega)-W^{1,\infty}(\Omega)$ bounds. The main difference will be a kind of oscillation dependence on the right-hand side, cf. \eqref{limcac}, and the obtainment of a constant independent of the dimension $n$. We shall derive here a general regularizing effect $L^p(\Omega)-W^{1,p}(\Omega)$, $2\leq p \leq\infty$, by a duality version of the Bernstein technique: this will be based on $L^p$ contractivity properties of backward heat equations, as the reader should keep in mind \eqref{Lpcontr} in Remark \ref{contr}. 
To this aim, we fix the following notation 
\begin{equation}\label{defFinf}
\mathcal{F}^+_\infty(\Omega):=\left\{\varphi\in C_0^\infty(\Omega)\,:\, \varphi\geq 0,\, \|\varphi\|_{L^1(\Omega)}=1\right\}
\end{equation}
and, for any $p\geq 2$,
\begin{equation}\label{defFp}
\mathcal{F}^+_p(\Omega):=\left\{\varphi\in C_0^\infty(\Omega)\,:\, \varphi\geq 0,\, \|\varphi\|_{L^{(p/2)'}(\Omega)}=1\right\},
\end{equation}
where we intend $1'=+\infty$.

\begin{thm}\label{lip0}
Suppose that $\Omega$ satisfies \eqref{conv}. Let $u$ be a solution to \eqref{heat} in $Q_T$ such that $u\in L^2\cap L^\infty$ and it satisfies \eqref{hypgrad}-\eqref{hypderiv}. Then we have the estimate
\begin{equation}\label{limcac}
|Du(x,\tau)|^2\leq \frac{1}{2\tau}\left(\sup_{y\in\Omega} u^2(y,0)-u^2(x,\tau)\right),\ \mbox{ for any }x\in\Omega\mbox{ and for any }\tau\in (0,T).
\end{equation}
Moreover, for any $p\in [2,\infty]$ and $\tau\in (0,T)$, we have
\begin{equation}\label{2ndA}
\tau\|Du(\tau)\|_{L^p(\Omega)}^2\leq \frac{1}{2}\|u(0)\|_{L^p(\Omega)}^2
\end{equation}
and
\begin{equation}\label{2ndB}
\sup_{\mu_\tau\in \mathcal{F}^+_p(\Omega)}\iint_{Q_\tau}2t|D^2u(x,t)|^2\mu(x,t)\,dxdt\leq \frac{1}{2}\|u(0)\|_{L^p(\Omega)}^2,
\end{equation}
where $\mu$ is the solution of the backward heat equation with homogeneous Neumann condition starting from the datum $\mu(\tau)=\mu_\tau$.
\end{thm}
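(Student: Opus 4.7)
The plan is to apply the duality strategy directly to the heat equation, with the role of \eqref{fp} played by the backward Neumann heat equation itself, i.e.\ with $b\equiv 0$. First, I would introduce the auxiliary Bernstein function
\[
W(x,t):=t|Du(x,t)|^2+\tfrac{1}{2}u^2(x,t)
\]
and verify, via the standard Bochner identity $\Delta|Du|^2=2|D^2u|^2+2\langle Du,D(\Delta u)\rangle$ combined with $\partial_t u=\Delta u$, the clean evolution
\[
\partial_t W-\Delta W=-2t|D^2u|^2 \quad\text{in }Q_\tau.
\]
In words, $W$ is subcaloric and its failure to be caloric is exactly the weighted Hessian quantity we want to control.

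Next I would couple the equation for $W$ with a test function $\mu_\tau\in\mathcal{F}^+_p(\Omega)$ by letting $\mu$ solve \eqref{fp} with $b\equiv 0$ and terminal datum $\mu_\tau$. By Theorem \ref{well} we have $\mu\geq 0$, and by \eqref{Lpcontr} in Remark \ref{contr} one has the contractivity
\[
\|\mu(t)\|_{L^{(p/2)'}(\Omega)}\leq\|\mu_\tau\|_{L^{(p/2)'}(\Omega)}=1 \quad\text{for all }t\in[0,\tau).
\]
Multiplying the equation for $W$ by $\mu$ and integrating by parts in $Q_\tau$, using $\partial_\nu\mu=0$ and $-\partial_t\mu-\Delta\mu=0$, the only surviving boundary contribution is $\int_{\Sigma_\tau}(\partial_\nu W)\mu\,d\sigma dt$. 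The geometric key is the sign
\[
\partial_\nu W=t\,\partial_\nu|Du|^2+u\,\partial_\nu u=t\,\partial_\nu|Du|^2\leq 0\quad\text{on }\Sigma_\tau,
\]
which follows from the Neumann condition on $u$ and Lemma \ref{denu} (this is where the convexity assumption \eqref{conv} enters). Since $\mu\geq 0$ the boundary term has the correct sign, and combining with $W(0)=\tfrac12 u^2(0)$ I arrive at the master inequality
\[
\int_\Omega\left(\tau|Du(\tau)|^2+\tfrac12 u^2(\tau)\right)\mu_\tau\,dx+\iint_{Q_\tau}2t|D^2u|^2\mu\,dxdt\leq\tfrac12\int_\Omega u^2(0)\mu(0)\,dx.
\]

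From this single inequality all three assertions follow by appropriate specializations. Hölder combined with the $L^{(p/2)'}$-contractivity of $\mu$ gives
\[
\int_\Omega u^2(0)\mu(0)\,dx\leq\|u(0)\|_{L^p(\Omega)}^2\|\mu(0)\|_{L^{(p/2)'}(\Omega)}\leq\|u(0)\|_{L^p(\Omega)}^2.
\]
Dropping the Hessian contribution on the left and taking $\sup_{\mu_\tau\in\mathcal{F}^+_p(\Omega)}$, the duality identity $\sup_{\mu_\tau\in\mathcal{F}^+_p(\Omega)}\int_\Omega|Du(\tau)|^2\mu_\tau\,dx=\||Du(\tau)|^2\|_{L^{p/2}(\Omega)}=\|Du(\tau)\|_{L^p(\Omega)}^2$ delivers \eqref{2ndA}; instead dropping the volumetric terms on the left-hand side and taking the same supremum yields \eqref{2ndB}. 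For the pointwise bound \eqref{limcac} I take $p=\infty$, fix an interior point $x\in\Omega$, and choose a sequence of compactly supported bumps $\mu_\tau\in\mathcal{F}^+_\infty(\Omega)$ concentrating at $x$; conservation of mass from Theorem \ref{well} gives $\|\mu(0)\|_{L^1(\Omega)}=1$, so that the right-hand side is bounded by $\tfrac12\sup_\Omega u^2(0)$, and the continuity of $u(\tau,\cdot)$ and $Du(\tau,\cdot)$ lets me pass to the pointwise limit.

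I expect the main obstacle to be the rigorous justification of these integrations by parts on a possibly unbounded $\Omega$: no term may leak at infinity, and this is precisely where the decay of $\mu$ provided by Theorem \ref{well} (whose hypothesis \eqref{intcone} sits in the background) and the qualitative integrability requirements \eqref{hypgrad}--\eqref{hypderiv} on $u$ are brought into play. The other non-negotiable ingredient is the favorable sign of the boundary contribution, which hinges on Lemma \ref{denu} and hence on the convexity assumption \eqref{conv}.
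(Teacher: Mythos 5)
Your proposal is correct and follows essentially the same route as the paper: the auxiliary function $W=t|Du|^2+\tfrac12 u^2$ is exactly the paper's $G_K$ with $K=\tfrac12$, and the rest (duality against the backward Neumann heat flow, the sign $\partial_\nu W\le 0$ from Lemma \ref{denu}, H\"older plus the $L^{(p/2)'}$-contractivity \eqref{Lpcontr}, and concentration of $\mu_\tau$ for the pointwise case $p=\infty$, keeping the $\tfrac12 u^2(\tau)$ term to get \eqref{limcac}) matches the paper's argument step for step. The only cosmetic difference is that you attribute the mass conservation/contractivity to Theorem \ref{well} (which formally assumes \eqref{intcone}), whereas the paper points out in Remark \ref{contr} that for $b\equiv 0$ this property holds without \eqref{intcone}, consistently with the hypotheses of the theorem.
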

\begin{proof}The idea is based on a global integral version of the Bernstein method. By putting together the identity
\[
\partial_t(u^2)=\Delta(u^2)-2|Du|^2
\]
with Bochner's identity, we find that the function $G_K(x,t)=t|Du(x,t)|^2+Ku^2(x,t)$, $K>0$ to be fixed, solves
\begin{equation}\label{eqK}
\partial_tG-\Delta G+2t|D^2u|^2= (1-2K)|Du|^2.
\end{equation}
We now choose $K=\frac12$ and obtain that $G=G_{\frac{1}{2}}$ solves
\[
\partial_tG-\Delta G+2t|D^2u|^2=0.
\]
We stress that the function $G$ belongs to $W$ since both the terms $t|Du(x,t)|^2$ and $u^2(x,t)$ are in $L^2(0,T;W^{1,2}(\Omega))$ and their $t$-derivatives are in $L^2(Q_T)$ (this is a consequence of the assumptions on the function $u$ which imply $u\in L^\infty(Q_T)\cap L^2(0,T;W^{3,2}(\Omega))$ and $Du\in L^\infty(Q_T)$). Let us now fix $\tau\in(0,T)$ and $p\in [2,\infty]$. On $Q_\tau$ we can test the previous equation by the nonnegative solution $\mu$ of the backward heat equation (i.e. \eqref{fp} with $b=0$) with $\mu(\tau)=\mu_\tau\in \mathcal{F}^+_p(\Omega)$ and we find the integral inequality
\begin{equation}\label{intineq}
\int_\Omega G(x,\tau)\mu_\tau(x)\,dx+ 2\iint_{Q_\tau}t|D^2u|^2\mu\,dxdt\leq\int_{\Omega}G(x,0)\mu(x,0)\,dx.
\end{equation}
We have exploited here that, when $\Omega$ is a convex domain, we have $\partial_\nu G\leq0$ on $\Sigma$ by Lemma \ref{denu} and by the homogeneous Neumann condition. Note now that
\begin{align*}
\int_{\Omega}G(x,0)\mu(x,0)\,dx&=\frac12\int_{\Omega}u^2(x,0)\mu(x,0)\,dx\\
&\leq \frac12\|u^2(0)\|_{L^{\frac{p}{2}}(\Omega)}\|\mu(0)\|_{L^{(\frac{p}{2})'}(\Omega)}\leq \frac12\|u^2(0)\|_{L^{\frac{p}{2}}(\Omega)}\|\mu(\tau)\|_{L^{(\frac{p}{2})'}(\Omega)}
\end{align*}
since by \eqref{Lpcontr} $\|\mu(0)\|_{q}\leq \|\mu_\tau\|_{q}$ for any $q\geq1$ (and the equality occurs when $q=1$). By plugging the previous inequality into \eqref{intineq} and using $\|\mu_\tau\|_{L^{(\frac{p}{2})'}(\Omega)}=1$, we obtain
\[
\int_\Omega \left(\tau|Du(x,\tau)|^2+\frac12u^2(x,\tau)\right)\mu_\tau(x)\,dx+2\iint_{Q_\tau}t|D^2u|^2\mu\,dxdt\leq \frac12\|u(0)\|^2_{L^{p}(\Omega)}.
\]
Hence we infer the validity of the inequalities
$$
\tau\int_\Omega |Du(x,\tau)|^2 \mu_\tau(x)\, dx\leq \frac12\|u(0)\|^2_{L^{p}(\Omega)}
$$
and
$$
\iint_{Q_\tau}2t|D^2u(x,t)|^2\mu(x,t)\,dxdt\leq \frac12\|u(0)\|^2_{L^{p}(\Omega)}
$$
from which the estimates \eqref{2ndA} and (respectively) \eqref{2ndB} follow by passing to the supremum over $\mu_\tau\in \mathcal{F}^+_p(\Omega)$. In the limiting case $p=\infty$ we get the refinement stated in \eqref{limcac}: as a matter of fact, from \eqref{intineq} and by using the conservation of mass $\|\mu(0)\|_{L^1}= \|\mu_\tau\|_{L^1}=1$, one has
\begin{align*}
\int_\Omega \left(\tau|Du(x,\tau)|^2+\frac12u^2(x,\tau)\right)\mu_\tau(x)\,dx&=\int_\Omega G(x,\tau)\mu_\tau(x)\,dx\\
&\leq \|\mu_\tau\|_{L^1(\Omega)}\|G(x,0)\|_{L^\infty(\Omega)}= \frac12\sup_{y\in\Omega}u^2(y,0)
\end{align*}
for every $\mu_\tau\in\mathcal{F}^+_\infty(\Omega)$. This implies the desired \eqref{limcac}.
\end{proof}

\begin{rem}
A result similar to \eqref{limcac} can be found in Theorem 3 of \cite{LiuPJM} for closed manifolds by the maximum principle, while related gradient estimates depending on the oscillation of the initial datum $u(0)$ were studied in \cite{AndrewsClutterbuck} by doubling of variables techniques in a periodic setting, cf. \cite{AndrewsSurvey} and the references therein for other results.
\end{rem}

\begin{rem}
Let us note that \eqref{limcac} (or \eqref{2ndA} with $p=\infty$) implies
\[
\|Du(\tau)\|_{L^\infty(\R^n)}\leq \frac{1}{\sqrt{2\tau}}\|u(0)\|_{L^\infty(\R^n)}.
\]
In particular, differently from the estimate with constant $C_n$ recalled at the beginning of the Section, the constant is independent of the dimension $n$. This estimate shares some similarities with those found in the theory of first- and second-order Hamilton-Jacobi equations. This can be heuristically justified by the presence of the term $(1-2K)|Du|^2$ in the equation of $G_K$, which is concave for $K>1/2$.
\end{rem}

The following is a refinement of an estimate found by R. Hamilton in Theorem 1.1 of \cite{Hamilton} and in Theorem 1.1 of P. Souplet-Q.S. Zhang in \cite{SoupletZhang}. The same estimate of \cite{Hamilton,SoupletZhang} was studied in a more general framework in Theorem 6.46 of \cite{Stroock} using a probabilistic interpretation of the B\"ochner's identity.
\begin{thm}\label{thmHSZ}
Suppose that $\Omega$ satisfies \eqref{intcone} and \eqref{conv}. Let $u$ be a solution to \eqref{heat} such that $u\in L^\infty$ with $0<u\leq A$, and assume \eqref{hypgrad}-\eqref{hypderiv}. Then

\begin{equation}\label{HSZ}
|D\log u(x,\tau)|^2\leq \frac1\tau\log\left(\frac{A}{u(x,\tau)}\right),\ \mbox{ for any }x\in\Omega\mbox{ and for any }\tau\in (0,T).
\end{equation}
Moreover, for any $\delta>0$, we have the integral bound
\begin{align}\label{HSZ2}
\sup_{\rho_\tau\in \mathcal{F}^+_\infty(\Omega)}\iint_{Q_\tau}2t&|D^2\log( u(x,t) +\delta)|^2\rho(x,t)\,dxdt  \\
\leq \sup_{y\in\Omega}&\left(\log\left(\frac{A+\delta}{u(y,\tau)+\delta}\right)-\tau \frac{|Du(y,\tau)|^2}{(u(y,\tau)+\delta)^2}\right)\notag
\end{align}
for any $\tau\in(0,T)$, where $\rho$ is the solution to \eqref{fp} with $b(x,t)=2D\log(u+\delta)$ starting from the datum $\rho(\tau)=\rho_\tau$.\\
If in addition $u\geq\delta_0>0$, \eqref{HSZ2} holds true with $\delta=0$.
\end{thm}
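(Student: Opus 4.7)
The plan is to mimic the Bernstein--duality strategy of the warm-up theorem, but now applied to the auxiliary function
\[
\Phi(x,t) := t|Dv(x,t)|^2 + v(x,t) - \log(A+\delta),
\]
where $v := \log(u+\delta)$ solves the viscous Hamilton--Jacobi problem \eqref{hjn}. Using Lemma \ref{eqz}(i) with $\chi(t)=t$, the function $g=t|Dv|^2$ satisfies $\partial_t g - \Delta g + 2t|D^2 v|^2 - 2\langle Dv, Dg\rangle = |Dv|^2$; adding this to \eqref{hjn} (equivalently $\partial_t v - \Delta v = |Dv|^2$) and recognizing that $2|Dv|^2 + 2\langle Dv, Dg\rangle = 2\langle Dv, D\Phi\rangle$, I arrive at the clean linear identity
\[
\partial_t \Phi - \Delta \Phi - 2\langle Dv, D\Phi\rangle = -2t|D^2 v|^2 \leq 0.
\]

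Two sign properties make the duality work. First, $\Phi(\cdot,0) = \log\bigl((u(0)+\delta)/(A+\delta)\bigr) \leq 0$ since $u \leq A$. Second, the Neumann condition $\partial_\nu u = 0$ propagates to $v$, so by Lemma \ref{denu} (where hypothesis \eqref{conv} enters) one has $\partial_\nu \Phi = t\,\partial_\nu |Dv|^2 \leq 0$ on $\Sigma_\tau$. I then test the $\Phi$-equation against the weak solution $\rho$ of \eqref{fp} with drift $b = 2Dv = 2Du/(u+\delta)$ and terminal datum $\rho(\tau) = \rho_\tau \in \mathcal{F}^+_\infty(\Omega)$. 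This drift is globally bounded thanks to \eqref{hypgrad} combined with $u+\delta \geq \delta > 0$, so Theorem \ref{well} delivers $\rho \geq 0$ with $\int_\Omega \rho(t)\, dx = 1$ for all $t \in [0,\tau]$. After integration by parts, the adjoint equation annihilates the interior integral, the Neumann-type condition for $\rho$ matched to $b = 2Dv$ kills one boundary contribution, and the leftover $-\int_{\Sigma_\tau}(\partial_\nu \Phi)\rho$ is nonnegative by the second sign property. The upshot is the master inequality
\[
\int_\Omega \Phi(x,\tau)\rho_\tau(x)\,dx + \iint_{Q_\tau} 2t|D^2 v(x,t)|^2 \rho(x,t)\,dxdt \leq \int_\Omega \Phi(x,0)\rho(x,0)\,dx \leq 0.
\]

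Both conclusions now drop out. For the pointwise estimate \eqref{HSZ}, I localize $\rho_\tau$ around an arbitrary $x_0 \in \Omega$ (a valid limit inside $\mathcal{F}^+_\infty(\Omega)$ since $\Phi(\cdot,\tau)$ is smooth) to deduce $\Phi(x_0,\tau) \leq 0$, that is, $\tau|Dv(x_0,\tau)|^2 \leq \log\bigl((A+\delta)/(u(x_0,\tau)+\delta)\bigr)$, and then let $\delta \to 0^+$. For the integral estimate \eqref{HSZ2}, I drop the favorable term $\int \Phi(0)\rho(0)$ and estimate
\[
\iint_{Q_\tau} 2t|D^2 v|^2\rho\,dxdt \leq \sup_{y\in\Omega}\bigl(-\Phi(y,\tau)\bigr)\int_\Omega \rho_\tau\,dx = \sup_{y\in\Omega}\left(\log\frac{A+\delta}{u(y,\tau)+\delta} - \tau\frac{|Du(y,\tau)|^2}{(u(y,\tau)+\delta)^2}\right),
\]
which is the claimed right-hand side. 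When $u \geq \delta_0 > 0$, the drift $2Du/u$ is already bounded, so the whole scheme applies verbatim with $\delta = 0$.

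The main obstacle I anticipate is twofold: rigorously justifying the integration by parts on the unbounded convex $\Omega$ (for which \eqref{intcone}, \eqref{hypderiv} and the $L^\infty$-bound on $b$ are essential to invoke Theorem \ref{well} and the distributional identities in the space $W$), and correctly matching the three boundary conditions so that the only surviving boundary term is the favorable one $-\int_{\Sigma_\tau}(\partial_\nu\Phi)\rho\geq 0$. It is precisely the regularization by $\delta>0$ that ensures $b\in L^\infty(Q_\tau)$ and thereby unlocks both the well-posedness of $\rho$ and its conservation of mass, which is in turn what makes the pointwise localization argument and the supremum estimate in the final step sharp.
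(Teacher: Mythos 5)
Your proof is correct, and it reaches exactly the master inequality that the paper's proof assembles (compare your display with \eqref{onetwopunch}--\eqref{perdopo}), but via a genuinely different decomposition. The paper works with the pure Bernstein quantity $g=t|Dv|^2$ from Lemma \ref{eqz}-(i), tests it against the adjoint variable $\rho$, and is then forced to estimate the cross term $\iint_{Q_\tau}|Dv|^2\rho\,dxdt$ by a \emph{second} duality pairing (testing the equation of $v$ by $\rho$ and the equation of $\rho$ by $v$), which is where the conservation of mass of Theorem \ref{well} is genuinely used, yielding \eqref{crossest}. You instead fold the logarithm into the auxiliary function $\Phi=t|Dv|^2+v-\log(A+\delta)$ (Hamilton's classical trick), so that the cross term is absorbed algebraically into the drift term $2\langle Dv,D\Phi\rangle$, the initial-time contribution is nonpositive pointwise because $u\le A$, and the only properties of $\rho$ you actually use are $\rho\ge 0$, $\|\rho_\tau\|_{L^1}=1$, and the sign of the boundary term via Lemma \ref{denu}; mass conservation is not needed (you quote it, but your argument never relies on it). What each approach buys: yours is more economical on the adjoint side and produces both \eqref{HSZ} and \eqref{HSZ2} from a single pairing; the paper's two-step route isolates the cross-term estimate \eqref{crossest}, which the authors value in its own right (it is the Mean Field Games-type estimate they highlight in a remark) and which does not require the pointwise bound $u\le A$ to enter through the auxiliary function. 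One technical remark: your test function $\Phi$ is bounded but, on unbounded $\Omega$, not in $W^{1,2}(\Omega)$ because of the additive constant, so pairing it with the weak solution $\rho$ strictly requires a cutoff justification; this is the same issue the paper faces when testing the equation of $\rho$ against the bounded function $v$, so your proof sits at the same level of rigor as the published one, and your closing paragraph correctly identifies where \eqref{intcone}, \eqref{hypgrad}--\eqref{hypderiv} and the $\delta$-regularization enter.
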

\begin{proof}
For any arbitrary $\delta>0$, we consider $v=\log(u+\delta)$. Recall that
\[
\partial_tv-\Delta v-|Dv|^2=0\text{ in }Q_T.
\]
The estimate is now a consequence of the adjoint-Bernstein method. By Lemma \ref{eqz}-(i) we have the following equation solved by $g=t\omega$, $\omega=|Dv|^2$
\[
\partial_t g-\Delta g+2t|D^2v|^2-2\left\langle Dv, Dg\right\rangle= \omega\text{ in }Q_T.
\]
If we fix $\tau\in(0,T)$, in $Q_\tau$ we can use as a test function the solution $\rho$ of the Cauchy-Neumann problem \eqref{fp} with drift $b(x,t)=2Dv(x,t)$ starting from $\rho(\tau)=\rho_\tau$ with $\rho_\tau\in \mathcal{F}^+_\infty(\Omega)$. We stress that the drift $b(x,t)=2Dv(x,t)=2\frac{Du(x,t)}{u(x,t)+\delta}$ belongs to $L^\infty$. We thus find the following inequality 
\begin{equation}\label{thusineq}
\int_{\Omega}g(\tau)\rho_\tau(x)\,dx+\iint_{Q_\tau}2t|D^2v|^2\rho\,dxdt\leq \iint_{Q_\tau}|Dv|^2\rho\,dxdt.
\end{equation}
As before, we note that the inequality is a consequence of the convexity of $\Omega$ and of Lemma \ref{denu}. When $\Omega=\R^n$ we test against the solution of the Cauchy problem on the whole space, therefore we do not have the boundary term and the above inequality is in fact an identity. We now bound the term $\iint_{Q_\tau}|D\log u|^2\rho\,dxdt$. We test in the weak sense the equation of $v$ by $\rho$ and use the Neumann condition to find
\[
\int_0^\tau\langle\partial_t v,\rho\rangle_{W^{1,2}(\Omega)-W'_\Omega}\,dt+\iint_{Q_\tau}(\left\langle Dv, D\rho\right\rangle-|Dv|^2\rho)\,dxdt=0,
\]
while we test the equation of $\rho$ by $v$ using the Neumann condition to discover
\[
\int_0^\tau-\langle\partial_t \rho,v\rangle_{W^{1,2}(\Omega)-W'_\Omega}\,dt+\iint_{Q_\tau}(\left\langle D\rho, Dv\right\rangle+\mathrm{div}(2Dv\rho))v\,dxdt=0.
\]
We subtract the second identity from the first and obtain, after integrating by parts, 
\[
\int_{\Omega}v(\tau)\rho_\tau(x)\,dx-\int_{\Omega}v(0)\rho(0)\,dx+\iint_{Q_\tau}|Dv|^2\rho\,dxdt=0.
\]
Since the conservation of mass showed in Theorem \ref{well} yields $\int_{\Omega}\rho(0)=\int_{\Omega}\rho(\tau)\,dx=1$, from the previous identity we obtain the following inequality
\begin{multline*}
\iint_{Q_\tau}|Dv|^2\rho\,dxdt=\int_{\Omega}\log(u(0)+\delta)\rho(0)\,dx-\int_{\Omega}\log(u(\tau)+\delta)\rho_\tau(x)\,dx\\
\leq \int_{\Omega}\log(A+\delta)\rho(0)\,dx-\int_{\Omega}\log(u(\tau)+\delta)\rho_\tau(x)\,dx=\int_{\Omega}(\log(A+\delta)-\log(u(\tau)+\delta))\rho_\tau(x)\,dx.
\end{multline*}
This gives us the bound
\begin{equation}\label{crossest}
\iint_{Q_\tau}|Dv|^2\rho\,dxdt\leq \int_{\Omega}\log\left(\frac{A+\delta}{u(\tau)+\delta}\right)\rho_\tau(x)\,dx.
\end{equation}
Inserting \eqref{crossest} in \eqref{thusineq} we deduce
\begin{equation}\label{onetwopunch}
\int_{\Omega}\tau |Dv|^2(x,\tau)\rho_\tau(x)\,dx+\iint_{Q_\tau}2t|D^2v|^2\rho\,dxdt\leq \int_{\Omega}\log\left(\frac{A+\delta}{u(\tau)+\delta}\right)\rho_\tau(x)\,dx.
\end{equation}
On one hand, \eqref{onetwopunch} is saying that
$$
\int_{\Omega}\tau \frac{|Du(x,\tau)|^2}{(u(x,\tau)+\delta)^2}\rho_\tau(x)\,dx\leq \int_{\Omega}\log\left(\frac{A+\delta}{u(x,\tau)+\delta}\right)\rho_\tau(x)\,dx\quad\mbox{ for every }\rho_\tau\in \mathcal{F}^+_\infty(\Omega)
$$
which, by passing to the supremum over $\rho_\tau$, implies
\begin{equation}\label{positivo}
\frac{|Du(x,\tau)|^2}{(u(x,\tau)+\delta)^2}\leq \frac{1}{\tau}\log\left(\frac{A+\delta}{u(x,\tau)+\delta}\right)\quad\mbox{ for every }x\in\Omega.
\end{equation}
By letting $\delta\to 0^+$ in \eqref{positivo} we infer the validity of \eqref{HSZ}. On the other hand, if we keep in mind \eqref{positivo},
we can rewrite \eqref{onetwopunch} as follows
\begin{equation}\label{perdopo}
\iint_{Q_\tau}2t|D^2v|^2\rho\,dxdt\leq \int_{\Omega}\left(\log\left(\frac{A+\delta}{u(x,\tau)+\delta}\right)-\tau \frac{|Du(x,\tau)|^2}{(u(x,\tau)+\delta)^2}\right)\rho_\tau(x)\,dx.
\end{equation}
Again by passing to the supremum over $\rho_\tau$, we obtain \eqref{HSZ2}. Moreover, if $u\geq\delta_0>0$, the same argument carries over with $\delta=0$ and $v=\log u$.
\end{proof}

\begin{rem}
Estimate \eqref{HSZ} was studied in complete Riemannian manifolds with curvature lower bounds in Theorem 6 of \cite{KprocAMS} and Theorem 3.2 of \cite{ZhangIMRN}.
\end{rem}

\begin{rem}
The paper \cite{BakryGentilLedouxSNS} (cf. the discussion below formula (1.8) therein), see also \cite{Stroock}, shows that when $0<u\leq 1$ one has the log-Lipschitz type regularization effect
\[
|D\psi(t)|^2\leq \frac{1}{2t},\ \psi=\sqrt{\log\left(\frac{1}{u}\right)}.
\]
Note that by expanding the term of the left-hand side one ends up with
\[
|D\log u(t)|^2\leq \frac{2}{t}\log\left(\frac{1}{u(t)}\right).
\]
\end{rem}
\begin{rem}
The way of estimating the (crossed) term $\iint |Dv|^2\rho$ in \eqref{crossest} is inspired from the theory of Mean Field Games \cite{LL}. The significance of this estimate is widely treated in \cite{cg20,PorrARMA}, while its importance from the stochastic viewpoint is described in \cite{BKRS,PorrARMA}. \end{rem}

\subsection{Second-order smoothing bounds}\label{sec;second}

The following result is due to P. Li and S.-T. Yau, cf. Theorem 1.1 in \cite{LiYauActa}, where the authors investigated second-order estimates for solutions of the heat equation on compact manifolds with convex boundary  (see also Section 4.2 in \cite{EvansSurvey} or \cite{EvansNotes}). Here we present a different new proof by duality following the lines of the previous sections. 

\begin{thm}\label{liyauconvex}
Suppose that $\Omega$ satisfies \eqref{intcone} and \eqref{conv}. Let $u$ be a positive solution to \eqref{heat} such that $u$ satisfies \eqref{hypgrad}-\eqref{hypderiv}. Then we have
\[
\Delta(\log u(x,\tau))=\frac{\partial_t u(x,\tau)}{u(x,\tau)}-\frac{|Du(x,\tau)|^2}{u^2(x,\tau)}\geq -\frac{n}{2\tau},\ \mbox{ for any }x\in\Omega\mbox{ and for any }\tau\in (0,T).
\]
\end{thm}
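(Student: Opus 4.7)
The plan is to adapt the duality Bernstein argument from the warm-up section, now working with the Laplacian $\Delta v$ in place of the arbitrary second derivative $\langle D^2 v\xi,\xi\rangle$. Fix $\delta>0$ and set $v=\log(u+\delta)$, so that $v$ solves \eqref{hjn}; applying Lemma \ref{eqz}(ii) with the weight $\chi(t)=t^2$ to $w(x,t):=t^2\Delta v(x,t)$ produces
\[
\partial_t w-\Delta w-2\langle Dv,Dw\rangle=2t^2|D^2v|^2+2t\Delta v\quad\text{in }Q_T,
\]
with $w(\cdot,0)\equiv 0$. For fixed $\tau\in(0,T)$ and $\rho_\tau\in\mathcal{F}^+_\infty(\Omega)$, I would test this equation against the weak solution $\rho$ of the adjoint Cauchy--Neumann problem \eqref{fp} with drift $b=2Dv$ and terminal datum $\rho_\tau$; positivity of $u$ together with \eqref{hypgrad} gives $b\in L^\infty(Q_\tau)$, so Theorem \ref{well} supplies a nonnegative $\rho$ with the conservation of mass $\int_\Omega\rho(\cdot,t)\,dx=1$ for every $t\in[0,\tau]$.

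Performing the integrations by parts on $Q_\tau$, the interior terms cancel against the adjoint PDE, the drift-boundary term is killed by $\langle Dv,\nu\rangle=0$ on $\Sigma_\tau$, and the boundary term involving $\partial_\nu\rho$ vanishes because $\partial_\nu\rho=2\rho\,\partial_\nu v=0$. The only surviving boundary contribution is $-\int_{\Sigma_\tau}\rho\,\partial_\nu w$: this is where convexity is decisive. Differentiating \eqref{hjn} in the normal direction and using $\partial_\nu v\equiv 0$ yields $\partial_\nu\Delta v=-\partial_\nu|Dv|^2$, and Lemma \ref{denu} applied to $v$ gives $\partial_\nu|Dv|^2\leq 0$, so $\partial_\nu w\geq 0$. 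Since $\rho\geq 0$, this produces the one-sided inequality
\[
\int_\Omega w(x,\tau)\rho_\tau(x)\,dx\;\geq\;\iint_{Q_\tau}\bigl(2t^2|D^2v|^2+2t\Delta v\bigr)\rho\,dxdt.
\]
To close the estimate I would invoke the trace form of Cauchy--Schwarz, $(\Delta v)^2\leq n|D^2v|^2$, together with the minimization of the resulting quadratic in $\Delta v$:
\[
2t^2|D^2v|^2+2t\Delta v\;\geq\;\frac{2t^2(\Delta v)^2}{n}+2t\Delta v\;\geq\;-\frac{n}{2}.
\]
Plugging this into the previous inequality and using conservation of mass gives $\tau^2\int_\Omega\Delta v(\cdot,\tau)\rho_\tau\,dx\geq -n\tau/2$; passing to the supremum over $\rho_\tau\in\mathcal{F}^+_\infty(\Omega)$ forces the pointwise bound $\Delta v(\cdot,\tau)\geq -n/(2\tau)$, and letting $\delta\to 0^+$ together with the identity $\Delta\log u=\partial_t u/u-|Du|^2/u^2$ (direct from the heat equation) concludes the argument.

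The main obstacle is the sign of the boundary contribution $\int_{\Sigma_\tau}\rho\,\partial_\nu w$: without convexity, and hence without Lemma \ref{denu}, this term would carry no definite sign and the duality estimate would collapse. The regularity assumptions \eqref{hypgrad}--\eqref{hypderiv} are what legitimize differentiating \eqref{hjn} up to second order, manipulating $|D^2v|^2$ pointwise, and testing $w\in W$ against $\rho\in W$; beyond those qualitative hypotheses the argument is a direct transplantation of the warm-up scheme, with the trace form of Cauchy--Schwarz replacing the vector one and $\Delta v$ replacing an arbitrary second directional derivative.
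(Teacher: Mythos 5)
Your proposal is correct and follows essentially the same duality scheme as the paper's proof: same auxiliary function $t^2\Delta v$ from Lemma \ref{eqz}-(ii), same adjoint problem with drift $2Dv$, same boundary-sign argument via Lemma \ref{denu}, and the same use of conservation of mass before letting $\delta\to 0^+$. The only cosmetic difference is in the final algebraic step, where you complete the square pointwise using $(\Delta v)^2\leq n|D^2 v|^2$ while the paper applies Cauchy--Schwarz and Young's inequality at the integral level; both yield the identical bound $-n\tau/2$.
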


\begin{proof}
Fix an arbitrary $\delta>0$, and consider $v=\log (u+\delta)$. Lemma \ref{eqz}-(ii) provides the following equation satisfied in $Q_T$ by $z=t^2\Delta v$
\[
\partial_t z-\Delta z-2|D^2v|^2t^2-2\left\langle Dv, Dz\right\rangle=2t\Delta v.
\]
For any $\tau\in (0,T)$, in $Q_\tau$ we consider as before the solution $\rho$ of the Cauchy-Neumann problem \eqref{fp} with $L^\infty$-drift $b(x,t)=2Dv(x,t)=2\frac{Du(x,t)}{u(x,t)+\delta}$ starting from $\rho(\tau)=\rho_\tau$ with $\rho_\tau\in \mathcal{F}^+_\infty(\Omega)$. 
By exploiting the viscous Hamilton-Jacobi equation satisfied by $v$ and the convexity of $\Omega$ via Lemma \ref{denu}, we notice that
\[
\partial_\nu z=t^2\partial_\nu (\Delta v)=t^2\partial_\nu(\partial_t v-|Dv|^2)= t^2\partial_t (\partial_\nu v)-t^2\partial_\nu(|Dv|^2)\geq 0 \text{ on }\Sigma_\tau.
\]
By using the latter information together with Cauchy-Schwarz inequality, we obtain by duality that
\[
\int_{\Omega}z(\tau)\rho_\tau(x)\,dx-2\iint_{Q_\tau}|D^2v|^2t^2\rho\,dxdt\geq -2\iint_{Q_\tau}t|\Delta v| \rho\geq -2\sqrt{n}\iint_{Q_\tau}t|D^2v|\rho\,dxdt.
\]
We can now use Young's inequality and the conservation of mass constraint $\int_\Omega \rho(t)\,dx=1$ ensured by Theorem \ref{well} in order to infer that
\begin{align*}
-2\sqrt{n}\iint_{Q_\tau}t|D^2v|\rho\,dxdt &\geq -2\iint_{Q_\tau}t^2|D^2v|^2\rho\,dxdt-\frac{n}{2}\iint_{Q_\tau}\rho\,dxdt\\
&= -2\iint_{Q_\tau}t^2|D^2v|^2\rho\,dxdt-\frac{n}{2}\tau.
\end{align*}
The combination of the previous two inequalities yields
$$
\tau^2\int_{\Omega}\Delta v(x,\tau)\rho_\tau(x)\,dx=\int_{\Omega}z(\tau)\rho_\tau(x)\,dx\geq -\frac{n}{2}\tau.
$$
Passing to the supremum over $\rho_\tau\in \mathcal{F}^+_\infty(\Omega)$, we conclude the pointwise bound 
\[
\Delta (\log u(\tau)+\delta)\geq -\frac{n}{2\tau}.
\]
Letting $\delta\to 0^+$ we reach the Li-Yau inequality. 
\end{proof}
\begin{rem}
A generalization of the Li-Yau estimate on unbounded domains can be found in Theorem 12 of \cite{BakryQian}: it treats heat flows on complete manifolds with convex boundary under a lower bound on $\partial_tu$. Theorem 1.3 of \cite{WangPJM} considered the Li-Yau inequality with bounds from below on the Ricci curvature on convex and nonconvex unbounded domains via a generalized form of the maximum principle.
\end{rem}
We conclude with an upper second derivative bound on the Laplacian of the logarithm (i.e. a semi-log-super\-harmonic estimate following the terminology of \cite{Lions82Book}, cf. Definition \ref{defisemi}) using the new integral estimate \eqref{HSZ2} in Theorem \ref{thmHSZ}. This appeared by different methods based on the maximum principle in Theorem E.36 of \cite{Ricciflowbook} and in Theorem 1.1 of \cite{HanZhang}. A related estimate was addressed also in Theorem 5.1 of \cite{LeeVazquez}, where the authors proved a semi-log-superharmonic estimate when the initial datum is compactly supported via heat kernel estimates. Recall that Theorem \ref{liyauconvex} provided the semi-log-subharmonic estimate
\[
\Delta \log u(\tau)\geq -\frac{n}{2\tau},
\]
whose constant is independent of the sup-norm of the solution and the initial datum.
\begin{thm}\label{HSZ2nd}
Suppose that $\Omega$ satisfies \eqref{intcone} and \eqref{conv}. Let $u$ be a solution to \eqref{heat} such that $u\in L^\infty$ with $0<u\leq A$, and assume \eqref{hypgrad}-\eqref{hypderiv}. Then, for any $\tau\in (0,T)$, we have
\begin{equation}\label{so1}
\Delta \log u(\tau)+2|D\log u(\tau)|^2=\frac{\Delta u(\tau)}{u(\tau)}+\frac{|D u(\tau)|^2}{u^2(\tau)}\leq \frac{1}{\tau}\left(n+\frac{7}{2}\log\left(\frac{A}{u(\tau)}\right)\right).
\end{equation}
\end{thm}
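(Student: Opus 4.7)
The plan is to adapt the adjoint-Bernstein scheme used for Theorems \ref{liyauconvex} and \ref{thmHSZ} to a carefully designed test function carrying a quadratic time weight. Set $v = \log(u+\delta)$ for $\delta > 0$, $\phi = \log\frac{A+\delta}{u+\delta}\geq 0$, and $\alpha = \Delta v + 2|Dv|^2$ (which tends to the left-hand side of \eqref{so1} as $\delta \to 0^+$). I consider the ansatz
\[
F(x,t) := t^2\alpha(x,t)\,-\, n\, t \,-\, \tfrac{7}{2}\, t\,\phi(x,t),
\]
and aim to prove $\int_\Omega F(x,\tau)\rho_\tau(x)\,dx \le 0$ for every $\rho_\tau \in \mathcal{F}^+_\infty(\Omega)$, where $\rho$ solves \eqref{fp} with drift $b = 2Dv$ and terminal datum $\rho_\tau$. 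The pointwise inequality \eqref{so1} then follows by approximating a Dirac delta with $\rho_\tau$ and sending $\delta \to 0^+$, exactly as in the earlier sections.

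The next step is the computation of $LF$, where $L = \partial_t - \Delta - 2Dv\cdot D$. Lemma \ref{eqz} applied with $\chi(t) = t^2$ gives $L(t^2\alpha) = 2t\alpha - 2t^2|D^2v|^2$; a direct calculation from \eqref{hjn} yields $L\phi = |Dv|^2$, so the Leibniz rule produces $L(t\phi) = \phi + t|Dv|^2$. Collecting terms,
\[
LF = 2t\Delta v \,+\, \tfrac{1}{2}\, t|Dv|^2 \,-\, 2t^2|D^2v|^2 \,-\, n \,-\, \tfrac{7}{2}\,\phi,
\]
where the arithmetic cancellation $4 - \tfrac{7}{2} = \tfrac{1}{2}$ in the $t|Dv|^2$ coefficient is precisely what will allow Hamilton's estimate to close the argument. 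Moreover $F(0) = 0$, and Lemma \ref{denu} combined with $\partial_\nu v = 0$ on $\partial\Omega$ (which forces $\partial_\nu\phi = 0$) gives the favorable sign $\partial_\nu F = t^2\partial_\nu|Dv|^2\leq 0$.

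Plugging these facts into the duality identity $\int_\Omega F(\tau)\rho_\tau - \int_\Omega F(0)\rho(0) = \iint_{Q_\tau}(LF)\rho + \iint_{\Sigma_\tau}(\partial_\nu F)\rho$ of the preceding proofs, and using the conservation of mass from Theorem \ref{well} so that $\iint_{Q_\tau}\rho = \tau$ (hence the constant contributes $-n\tau$), I would control the indefinite-sign term $2t\Delta v$ via the Cauchy--Schwarz bound $|\Delta v|\leq\sqrt{n}\,|D^2v|$ together with the sharply calibrated Young inequality
\[
2t|\Delta v|\leq 2\sqrt{n}\, t|D^2v|\leq n + t^2|D^2v|^2.
\]
The resulting $t^2|D^2v|^2$ term is absorbed by half of the $-2t^2|D^2v|^2$ already present in $LF$, while the leftover $\tfrac{1}{2}t|Dv|^2$ contribution is dispatched by the Hamilton-type pointwise bound $t|Dv|^2\leq\phi$ from Theorem \ref{thmHSZ}. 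Collecting everything yields
\[
\int_\Omega F(\tau)\rho_\tau \,\leq\, -\iint_{Q_\tau}t^2|D^2v|^2\rho \,-\, 3\iint_{Q_\tau}\phi\,\rho \,\leq\, 0,
\]
which upon rearrangement is the desired estimate.

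The principal obstacle I anticipate is the precise calibration of the ansatz rather than any single bound: a linear weight $\chi(t) = t$ would force a non-integrable $\int_0^\tau dt/t$ contribution in the Young step when pairing $\Delta v$ against $t|D^2v|^2$, so the quadratic weight $\chi(t) = t^2$ is essential. Once this is fixed, the two constants $n$ and $\tfrac{7}{2}$ are uniquely pinned down by the joint requirement that the Young step (which dictates the coefficient $n$) and the Hamilton bound (which dictates $\tfrac{7}{2}$ through the $4-\tfrac{7}{2}=\tfrac{1}{2}$ surplus) both close without leaving any positive residual.
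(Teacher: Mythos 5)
Your argument is correct and reaches the stated constants through the same circle of ideas as the paper --- Lemma \ref{eqz} with $\chi(t)=t^2$, duality against the adjoint solution with drift $2Dv$, conservation of mass from Theorem \ref{well}, the Young step $2\sqrt{n}\,t|D^2v|\le n+t^2|D^2v|^2$, and the Hamilton gradient bound --- but packaged differently. The paper argues in two steps: it first bounds $\Delta\log u$ alone, testing $w=t^2\Delta v$ and absorbing the Hessian terms via the integral bound \eqref{perdopo} inherited from the proof of Theorem \ref{thmHSZ}, which gives $\Delta\log u(\tau)\le \frac{3}{2\tau}\log\left(\frac{A}{u(\tau)}\right)+\frac{n}{\tau}$; it then adds $2|D\log u(\tau)|^2\le\frac{2}{\tau}\log\left(\frac{A}{u(\tau)}\right)$ from \eqref{HSZ}, so $\frac{7}{2}=\frac{3}{2}+2$. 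You instead make a single duality pass on the combined function $F=t^2(\Delta v+2|Dv|^2)-nt-\frac{7}{2}t\phi$ and use the pointwise bound $t|Dv|^2\le\phi$ at interior times to absorb the leftover $\frac{1}{2}t|Dv|^2$; your computation of $LF$, of $F(0)=0$, and of the boundary sign are all correct. Two remarks. First, your decomposition buys something at the boundary: $\partial_\nu\bigl(t^2(\Delta v+2|Dv|^2)\bigr)=t^2\partial_\nu|Dv|^2\le0$ by Lemma \ref{denu}, which is precisely the sign needed for an upper bound, whereas the intermediate function $t^2\Delta v$ used in the paper satisfies $\partial_\nu(t^2\Delta v)=-t^2\partial_\nu|Dv|^2\ge0$, the direction exploited for the Li-Yau lower bound of Theorem \ref{liyauconvex}; so your packaging is cleaner when $\partial\Omega\neq\emptyset$. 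Second, a citation nuance: the bound $t|Dv|^2\le\phi$ you invoke is for $v=\log(u+\delta)$ and $\phi=\log\frac{A+\delta}{u+\delta}$, and it does not follow from the statement \eqref{HSZ} by monotonicity (indeed $\log\frac{A}{u}\ge\log\frac{A+\delta}{u+\delta}$, the wrong direction); it is, however, exactly the $\delta$-level estimate \eqref{positivo} established in the proof of Theorem \ref{thmHSZ} under the same hypotheses, and alternatively you could rerun the crossed-term estimate \eqref{crossest} inside your own scheme. With that reference made precise, your proof is complete.
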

\begin{proof}
Fix arbitrary $\delta>0$ and $\rho_\tau\in \mathcal{F}^+_\infty(\Omega)$, and let $v=\log (u+\delta)$. In the proof of Theorem \ref{thmHSZ} we established the integral bound \eqref{perdopo}, from which we deduce
\begin{equation}\label{perqui}
\iint_{Q_\tau} t|D^2v|^2\rho\,dxdt\leq \frac12\int_{\Omega}\log\left(\frac{A+\delta}{u(\tau)+\delta}\right)\rho_\tau(x)\,dx.
\end{equation}
On the other hand, we can use Lemma \ref{eqz}-$(ii)$ to find the following equation for $w=t^2\Delta v$
\[
\partial_t w-\Delta w-2\left\langle Dv, Dw\right\rangle=2t^2|D^2v|^2+2t\Delta v.
\]
Exploiting the convexity of the domain (as in the proof of Theorem \ref{liyauconvex}), this implies by duality
\[
\int_{\Omega}w(\tau)\rho_\tau(x)\,dx\leq 2\iint_{Q_\tau}t^2|D^2v|^2\rho\, dxdt+\iint_{Q_\tau}2t\Delta v\rho\,dxdt.
\]
We then have by the Cauchy-Schwarz and Young inequalities
\[
\iint_{Q_\tau}2t\Delta v\,dxdt\leq 2\sqrt{n}\iint_{Q_\tau}t|D^2 v|\rho\,dxdt\leq \iint_{Q_\tau}t^2|D^2v|^2\rho\, dxdt+n\iint_{Q_\tau}\rho\,dxdt.
\]
Therefore, since $t\leq \tau$ in $Q_\tau$ and using the conservation of mass property from Theorem \ref{well}, we obtain
\begin{align*}
\int_{\Omega}w(\tau)\rho_\tau(x)\,dx&\leq 3\iint_{Q_\tau}t^2|D^2v|^2\rho\, dxdt+n\int_0^\tau\int_\Omega \rho_\tau(x)\,dxdt \\
&\leq 3\tau\iint_{Q_\tau}t|D^2v|^2\rho\, dxdt+n\tau\int_\Omega \rho_\tau(x)\,dx.
\end{align*}
Inserting in the latter inequality the bound in \eqref{perqui}, we infer
\[
\tau^2\int_{\Omega}\Delta v(x,t)\rho_\tau(x)\,dx=\int_{\Omega}w(\tau)\rho_\tau(x)\,dx\leq \tau\int_{\Omega}\left(\frac32\log\left(\frac{A+\delta}{u(\tau)+\delta}\right)+n\right)\rho_\tau(x)\,dx.
\]
The arbitrariness of $\rho_\tau\in \mathcal{F}^+_\infty(\Omega)$ thus yields 
$$
\Delta \log (u(\tau)+\delta)\leq \frac{3}{2\tau}\log\left(\frac{A+\delta}{u(\tau)+\delta}\right)+\frac{n}{\tau}.
$$
By letting $\delta\to 0^+$ we finally have
$$
\Delta \log (u(\tau))\leq \frac{3}{2\tau}\log\left(\frac{A}{u(\tau)}\right)+\frac{n}{\tau},
$$
which shows the validity of \eqref{so1} once we combine it with \eqref{HSZ}.
\end{proof}

\section{Geometric preserving properties for the heat flow in $\R^n$}\label{sec;conservation}

In this final section we restrict ourselves to the case $\Omega=\R^n$. We shall use the notation
$$
S_T=\R^n\times(0,T).
$$
As in the subSection \ref{warm} and differently to the case of the previous section, we deal with bounds for the full Hessian matrix of the solution. It is known that matrix estimates of Hamilton type are delicate in the case of domains with boundaries and they hold true only in special situations, see e.g. \cite{Pule}. We address the conservation of geometric properties of the solution $u$ to the classical Cauchy problem on $\R^n$ starting from an initial datum $u(0)$. We recall that convexity/concavity are geometric properties preserved by the heat equation: indeed, any pure second derivative $u_{ee}$ is still a solution of the heat equation, therefore the conclusion follows by the maximum principle. Nonetheless, an inspection of the explicit fundamental solution of the heat equation suggests that even the $\log$-concavity/convexity might be preserved. \\

We will assume
\begin{equation}\label{pres1}
u(0)\geq \delta>0\text{ and }Du(0),D^2u(0)\in L^\infty(\R^n)
\end{equation}
and
\begin{equation}\label{pres2}
u(0)\in W^{3,2}(\R^n).
\end{equation}
As discussed in subSection \ref{sec21}, the latter assumption \eqref{pres2} ensures higher-order integrability of $u$ that makes rigorous our next variational computations. Assumption \eqref{pres1} guarantees, among others, the boundedness condition of the velocity field of the adjoint equation.\\

We start with the following
\begin{defn}\label{defisemi}
We say that a twice-differentiable function $u$ is semi-log-concave (resp. semi-log-convex) if $\log u$ is semiconcave (semiconvex), namely $D^2\log u\leq C\mathbb{I}_n$ (resp. $D^2\log u\geq -C\mathbb{I}_n$) for a positive constant $C$. We say that $u\in C^2$ is semi-log-superharmonic (resp. semi-log-subharmonic) if $\log u$ is semi-superharmonic (semi-subharmonic), namely $\Delta \log u\leq C$ (resp. $\Delta \log u\geq -C$).
\end{defn}
 
The main contributions of this section with respect to the current literature, other than a new method of proof, are the treatment of semiconcave/semiconvex terms as well as PDEs with integral data. We finally devote particular attention to provide explicit estimates on the magnitude on the norms involved.

\begin{thm}\label{semilogconvex}
Let $u$ be a positive solution to the heat equation in $S_T$ satisfying \eqref{pres1}-\eqref{pres2}. Suppose that $u(0)$ is semi-log-convex at the initial time. Then $u$ is semi-log-convex for positive times with the same constant.
\end{thm}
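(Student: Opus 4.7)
The plan is to adapt the adjoint-Bernstein scheme from the warm-up theorem, with the crucial modification of choosing $\chi\equiv 1$ (constant in $t$) so that the initial-time information is carried forward unchanged. Semi-log-convexity with constant $C$ means $\langle D^2\log u(0)\xi,\xi\rangle\geq -C|\xi|^2$ for every $\xi\in\R^n$; the goal is to upgrade this to $\langle D^2\log u(\tau)\xi,\xi\rangle\geq -C|\xi|^2$ for every $\tau\in(0,T)$ and every $\xi$.

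First, since by \eqref{pres1} $u(0)\geq \delta>0$, the heat semigroup on $\R^n$ preserves this lower bound, so $u\geq\delta$ on $S_T$ and I can set $v=\log u$ directly (without perturbing $u$ by $\delta$). Then $v$ solves $\partial_t v-\Delta v-|Dv|^2=0$ in $S_T$. Fixing $\xi\in\R^n$, I set $h(x,t)=\langle D^2 v(x,t)\xi,\xi\rangle$ and apply Lemma \ref{eqz}(ii) with $\chi\equiv 1$ to obtain
\[
\partial_t h-\Delta h-2|D^2v\,\xi|^2-2\langle Dv,Dh\rangle=0 \quad \text{in }S_T.
\]
Note that the source term $\chi'(t)h$ vanishes, which is exactly the feature that will let the constant propagate without loss.

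Next, fix $\tau\in(0,T)$ and a terminal datum $\rho_\tau\in\mathcal{F}^+_\infty(\R^n)$. The drift $b=2Dv=2Du/u$ belongs to $L^\infty(Q_\tau)$ because $u\geq\delta$ and $Du\in L^\infty$ is preserved by the heat flow thanks to \eqref{pres1}. Theorem \ref{well} then produces a nonnegative weak solution $\rho$ of \eqref{fp} on $Q_\tau$ with $\int_{\R^n}\rho(x,t)\,dx=1$ for all $t\in[0,\tau]$. Testing the equation for $h$ against $\rho$ and integrating by parts (the equation for $\rho$ is precisely the formal adjoint in the duality pairing, and boundary terms at infinity vanish thanks to \eqref{pres2}-type integrability for $v$ and its derivatives) yields
\[
\int_{\R^n}h(x,\tau)\rho_\tau(x)\,dx-\int_{\R^n}h(x,0)\rho(x,0)\,dx=\iint_{Q_\tau}2|D^2v\,\xi|^2\rho\,dxdt\geq 0.
\]
The key observation is that the quadratic term on the right is nonnegative and can simply be discarded, which is the mechanism behind the one-sided preservation of the estimate.

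Using the hypothesis $h(x,0)\geq -C|\xi|^2$ together with conservation of mass $\int\rho(x,0)\,dx=1$, I obtain
\[
\int_{\R^n}h(x,\tau)\rho_\tau(x)\,dx\geq \int_{\R^n}h(x,0)\rho(x,0)\,dx\geq -C|\xi|^2.
\]
Passing to the supremum over $\rho_\tau\in\mathcal{F}^+_\infty(\R^n)$ gives the pointwise bound $h(x,\tau)\geq -C|\xi|^2$, and the arbitrariness of $\xi$ yields $D^2\log u(\tau)\geq -C\mathbb{I}_n$, i.e. semi-log-convexity with the same constant $C$. The main obstacle is purely technical: justifying rigorously the integration by parts requires the $L^2$-based integrability at infinity of $h$, $Dh$, and their products with $\rho$, which is exactly the role of hypothesis \eqref{pres2} (higher-order $W^{3,2}$ regularity of $u(0)$ propagating to $u$ through classical heat-equation regularity theory) combined with the $L^\infty$ gradient bound from \eqref{pres1}. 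Once these qualitative integrability conditions are in place, the whole argument is the same single application of the adjoint equation plus conservation of mass that drives the warm-up proof of \eqref{HamiltonEst}, with the only difference that here we do not introduce a $t$-weight $\chi(t)$ and thus the initial datum appears intact on the right-hand side.
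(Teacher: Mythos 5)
Your proof is correct and follows essentially the same route as the paper: apply Lemma \ref{eqz}(ii) with $\chi\equiv 1$ to $h=\langle D^2\log u\,\xi,\xi\rangle$, pair with the adjoint solution $\rho$ of \eqref{fp} with drift $b=2D\log u$ (bounded thanks to \eqref{pres1}), discard the nonnegative term $2\iint|D^2v\,\xi|^2\rho$, and use $\rho(0)\geq 0$ together with conservation of mass to transport the bound $h(0)\geq -C|\xi|^2$ to time $\tau$. The only cosmetic difference is that you make explicit the propagation of the lower bound $u\geq\delta$ and of the gradient bound, which the paper leaves implicit.
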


\begin{proof}
Lemma \ref{eqz}-$(ii)$ with $\chi=1$ gives the following equation satisfied by $h=\langle D^2v \xi,\xi\rangle$ where $v=\log u$,
\[
\partial_t h-\Delta h-2|D^2v\xi|^2-2\left\langle Dv, Dh\right\rangle=0.
\]
By duality, testing the above equation against the solution $\rho$ of \eqref{fp} with $b(x,t)=2Dv$, we get
\[
\int_{\R^n}h(\tau)\rho_\tau(x)\,dx-\int_{\R^n}h(0)\rho(0)\,dx-2\iint_{S_\tau}|D^2v\xi|^2\rho\,dxdt=0.
\]
Since $\rho(0)\geq0$, this implies that
$$
\int_{\R^n}\left\langle D^2v(x,\tau)\xi,\xi\right\rangle\rho_\tau(x)\,dx\geq \int_{\R^n}h(0)\rho(0)\,dx=\int_{\R^n}\left\langle D^2v(x,0)\xi,\xi\right\rangle\rho(0)\,dx\geq-c_0.
$$
Thus $D^2(\log u)(\tau)\geq-c_0\mathbb{I}_n$, i.e. $(\log u)(\tau)$ is semiconvex, $\tau>0$, and hence the assertion. \end{proof}
As a partial converse, we have the following property: it shows the preservation of semi-log-concavity for positive solutions of the heat equation. The proof needs an intermediate two-side second derivative estimate for $\log$ solutions of the heat equation.
\begin{thm}\label{BL}
Let $u$ be a positive solution to the heat equation in $S_T$ satisfying \eqref{pres1}-\eqref{pres2}. Suppose that $D^2\log u(0)\leq c_0\mathbb{I}_n$ in $\R^n$. Assume that 
\begin{equation}\label{bound}
\|D\log u(0)\|_{L^\infty(\R^n)}\leq K_0.
\end{equation}
Then $u$ is semi-log-concave for positive times (with a different constant). 
\end{thm}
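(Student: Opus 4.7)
The plan is to derive the upper Hessian bound on $v:=\log u$ by a duality argument in the spirit of Theorem~\ref{semilogconvex}, applied to an auxiliary quantity that compensates for the unfavorable sign of the quadratic term in the evolution of $h:=\langle D^2v\,\xi,\xi\rangle$. For a fixed unit vector $\xi\in\R^n$, set $\omega:=|Dv|^2$; Lemma~\ref{eqz} gives
\[
\partial_t h - \Delta h - 2\langle Dv,Dh\rangle = 2|D^2v\,\xi|^2,\qquad \partial_t\omega-\Delta\omega-2\langle Dv,D\omega\rangle = -2|D^2v|^2.
\]
The key algebraic observation is that, for unit $\xi$, one has $|D^2v\,\xi|^2\le |D^2v|^2$ with Frobenius norm on the right, since $\xi^T(D^2v)^2\xi\le \max_i\lambda_i^2(D^2v)\le \sum_i\lambda_i^2(D^2v)=|D^2v|^2$. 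Consequently the sum $G:=h+\omega$ satisfies
\[
\partial_t G-\Delta G - 2\langle Dv, DG\rangle = 2\bigl(|D^2v\,\xi|^2-|D^2v|^2\bigr)\le 0,
\]
making $G$ a subsolution of the formal linearization of \eqref{hjn}, and hence amenable to duality.

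To conclude, I would fix $\tau\in(0,T)$ and $\rho_\tau\in\mathcal{F}^+_\infty(\R^n)$, and let $\rho$ be the weak solution of the adjoint problem \eqref{fp} on $S_\tau$ with drift $b=2Dv$ and terminal datum $\rho_\tau$. To secure $b\in L^\infty$ and put Theorem~\ref{well} to work, I would first propagate the gradient bound $\|Dv(t)\|_{L^\infty}\le K_0$ from \eqref{bound} by duality on $\omega$, exactly as in the proof of Theorem~\ref{lip0}. Then Theorem~\ref{well} gives $\rho\ge 0$ with $\int_{\R^n}\rho(x,t)\,dx=1$ on $[0,\tau]$. Testing the inequality for $G$ against $\rho$ and using conservation of mass yields
\[
\int_{\R^n} G(x,\tau)\rho_\tau(x)\,dx\le \int_{\R^n} G(x,0)\rho(x,0)\,dx\le c_0+K_0^2,
\]
where the last inequality uses $G(\cdot,0)=\langle D^2v(0)\xi,\xi\rangle+|Dv(0)|^2\le c_0+K_0^2$. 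Letting $\rho_\tau$ approximate a Dirac mass at an arbitrary $x\in\R^n$ and noting $|Dv|^2\ge 0$, one obtains $\langle D^2v(x,\tau)\xi,\xi\rangle\le c_0+K_0^2$; supping over unit $\xi$ yields $D^2\log u(x,\tau)\le (c_0+K_0^2)\mathbb{I}_n$.

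The intermediate two-sided second derivative estimate anticipated by the theorem enters precisely to legitimize the integral manipulations on the unbounded domain $\R^n$: the lower bound $D^2v(t)\ge -C_0\mathbb{I}_n$ follows from Theorem~\ref{semilogconvex} (since $D^2v(0)\in L^\infty$ by \eqref{pres1}), while the maximum principle applied componentwise to $D^2u$, together with $u(t)\ge\delta$, gives the matching upper bound $\|D^2v(t)\|_{L^\infty}\le \|D^2u(0)\|_{L^\infty}/\delta+K_0^2$ uniformly in $t\in[0,T]$. This ensures $G\in L^\infty(S_T)$, which is the key ingredient to control the boundary terms at infinity in the duality pairing, by coupling with the $L^1\cap L^2$-decay properties of $\rho$ established in the proof of Theorem~\ref{well} (cf.~the cutoff-decay estimate leading to \eqref{claimiamo}). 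Making this qualitative control of the behavior at infinity fully rigorous -- essentially, propagating through the cutoff argument a weighted estimate that accounts both for $G$ and for $DG$ (which involves third derivatives of $v$, controlled via \eqref{pres2}) -- is the main obstacle to turning the sketch above into a formal proof.
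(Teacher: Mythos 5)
Your argument is correct and reaches the paper's conclusion with the same constant $c_0+K_0^2$; the route is the paper's proof reorganized rather than a truly different one. The paper runs two duality passes: first it tests the equation for $\omega=|Dv|^2$ against $\rho$ to get the space-time bound $2\iint_{S_\tau}|D^2v|^2\rho\,dxdt\leq K_0^2$, and then inserts this into the duality identity for $w=v_{ee}$, whose source term $2|Dv_e|^2$ is dominated by $2|D^2v|^2$. You instead add the two equations from Lemma \ref{eqz} and use the pointwise inequality $|D^2v\,\xi|^2\leq|D^2v|^2$ (for unit $\xi$) to make $G=h+\omega$ a subsolution of the linearized drift-diffusion operator, so that a single duality pass plus conservation of mass and $\omega\geq0$ gives the bound; this is the same estimate packaged as one Bernstein-type auxiliary function, in the spirit of $G_K$ in Theorem \ref{lip0}, and both arguments use identically Theorem \ref{well} with drift $b=2Dv$. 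Two small remarks: you cannot ``secure $b\in L^\infty$'' by the duality on $\omega$, since that duality already presupposes a bounded drift; but no harm is done, because \eqref{pres1} gives it directly ($u\geq\delta$ and $|Du|\leq\|Du(0)\|_{L^\infty}$ are preserved by the heat flow, so $|Dv|\leq\|Du(0)\|_{L^\infty}/\delta$), which is exactly how the paper reads \eqref{pres1}. Finally, your concern about justifying the integrations by parts at infinity is legitimate but not an extra obstacle relative to the paper: there these manipulations are licensed qualitatively by \eqref{pres1}--\eqref{pres2} (the analogue of \eqref{hypgrad}--\eqref{hypderiv}), and your argument needs nothing beyond that.
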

\begin{proof}
We first prove that for $v=\log u$ the following two-side estimate holds
\[
\iint_{S_\tau}|D^2v|^2\rho\,dxdt\leq K^2_0.
\]
Set $\omega=|Dv|^2$ and obtain the identity
\[
\partial_t \omega-\Delta \omega+2|D^2v|^2-2\left\langle Dv, D\omega\right\rangle=0.
\]
By duality and using $\int_{\R^n}\rho(t)\,dx=1$ for all $t\in[0,T]$ we have
\[
2\iint_{S_\tau}|D^2v|^2\rho\,dxdt\leq -\int_{\R^n}\omega(\tau)\rho_\tau(x)\,dx+\int_{\R^n}\omega(0)\rho(0)\,dx\leq K_0^2.
\]
We now use that $w=\partial_{ee} v$, $e$ being a unitary direction of $\R^n$, solves
\[
\partial_t w-\Delta w-2|Dv_e|^2-2\left\langle Dv, Dw\right\rangle=0.
\]
By duality this yields
\[
\int_{\R^n}w(\tau)\rho_\tau(x)\,dx=2\iint_{S_\tau}|Dv_e|^2\rho\,dxdt+\int_{\R^n}w(0)\rho(0)\,dx\leq K_0^2+c_0.
\]
\end{proof}

\begin{cor}\label{bl}
We have the following two-side estimate for positive solutions of the heat equation satisfying \eqref{pres1}-\eqref{pres2} and \eqref{bound}, where $u(0)$ is additionally both semi-log-concave and semi-log-convex (with constant $c_0$)
\[
-c_0\mathbb{I}_n\leq D^2\log u(\tau)\leq \left(K_0^2+c_0\right)\mathbb{I}_n.
\]
\end{cor}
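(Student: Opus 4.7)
The plan is to assemble the two inequalities directly from the preceding two theorems, since the corollary is essentially a packaging of what has already been proved. The lower bound is free: the hypothesis that $u(0)$ is semi-log-convex with constant $c_0$ means exactly $D^2 \log u(0) \geq -c_0 \mathbb{I}_n$, and Theorem \ref{semilogconvex} transports this to $D^2 \log u(\tau) \geq -c_0 \mathbb{I}_n$ for all $\tau \in (0,T)$ without any loss in the constant.

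For the upper bound, I would revisit the last display in the proof of Theorem \ref{BL}. There, after fixing a unit direction $e \in \R^n$ and taking $w = \partial_{ee} v$ with $v = \log u$, the adjoint-Bernstein computation produced
\[
\int_{\R^n} w(x,\tau)\, \rho_\tau(x)\,dx = 2 \iint_{S_\tau} |D v_e|^2 \rho\, dxdt + \int_{\R^n} w(x,0)\, \rho(x,0)\,dx \leq K_0^2 + c_0,
\]
where the bound on the first term comes from the a priori two-side estimate $\iint |D^2 v|^2 \rho \leq K_0^2$ (established earlier in that same proof using \eqref{bound} and conservation of mass for $\rho$), and the bound on the second term comes from the semi-log-concavity of the initial datum together with $\int_{\R^n} \rho(0)\,dx = 1$.

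The final step is the usual duality passage: since the inequality above holds for every $\rho_\tau \in \mathcal{F}^+_\infty(\R^n)$ (whose existence and basic properties were secured in Theorem \ref{well}), taking the supremum over such test densities yields the pointwise bound $\partial_{ee} \log u(x,\tau) \leq K_0^2 + c_0$. Since $e$ was an arbitrary unit vector, this amounts to $D^2 \log u(\tau) \leq (K_0^2 + c_0) \mathbb{I}_n$, which combined with the lower bound finishes the proof.

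There is no real obstacle here: the whole content is bookkeeping, and the only small care needed is to state explicitly the quantitative constant in Theorem \ref{BL} (which is only asserted qualitatively in its statement) by reading it off from the displays of its proof. In that sense, the corollary can be viewed as making precise the magnitude of the semi-log-concavity constant produced by Theorem \ref{BL}, in the spirit of the explicit constant tracking announced at the beginning of Section \ref{sec;conservation}.
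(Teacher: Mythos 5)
Your proposal is correct and coincides with the paper's (implicit) argument: the corollary is indeed obtained by combining Theorem \ref{semilogconvex} for the lower bound with the constant $K_0^2+c_0$ read off from the final display in the proof of Theorem \ref{BL}, followed by the same supremum over $\rho_\tau\in\mathcal{F}^+_\infty(\R^n)$ and the arbitrariness of the unit direction $e$. No gap; this is exactly the bookkeeping the paper intends.
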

\begin{rem}
A discussion about the dependence of the log-concavity/convexity estimates in Corollary \ref{bl}, along with a comparison with \cite{BrascampLieb}, are in order. While the semi-log-convexity is preserved by the heat flow with the same constant, cf. Theorem \ref{semilogconvex}, this does not occur in the log-concavity bound. In fact, this depends on the Lipschitz regularity of the solution through the constant $K_0$ and the semi-log-concavity of the initial datum. This quantitative bound is close in spirit to \cite[Theorem VII.3]{IshiiLions} or \cite[Theorem 3.1]{Gigaetal}, where solutions or initial data are assumed Lipschitz continuous. The classical proof via the Prekopa-Leindler inequality of \cite{BrascampLieb} (cf. \cite[Lemma 3.2]{Bez}) shows that if $\log u(0)$ is concave, then $\log u(t)$ is concave for $t>0$ without any Lipschitz regularity requirement, but requires a log-concave (not semiconcave) initial datum. An alternative approach to \cite{BrascampLieb} can be also found in the theory of stochastic control, see \cite[Lemma IV.10.6]{FlemingSoner}. \end{rem}
We conclude with semi-log-concavity/convexity properties for solutions of heat equations with space-time potentials.
\begin{thm}
Let $u$ be a solution to the heat equation with potential
\[
\partial_t u-\Delta u+F(x,t)u=0\text{ in }S_T
\]
and satisfying \eqref{pres1}-\eqref{pres2}.
\begin{itemize}
\item[(a)] Assume that $D^2\log u(0)\geq -c_1\mathbb{I}_n$ along with $D^2 F\leq c_{f,1}(t)\mathbb{I}_n$, $c_{f,1}\in L^1(0,T)$. Then
\[
D^2\log u\geq -c_1\mathbb{I}_n-\mathbb{I}_n\int_0^\tau c_{f,1}(t)\,dt.
\]
\item[(b)] If instead $\Delta\log u(0)\geq -c_2$ along with $\Delta F\leq c_{f,2}(t)\in L^1(0,T)$, then
\[
\Delta \log u\geq -c_2-\int_0^\tau c_{f,2}(t)\,dt.
\]
\item[(c)] If $D^2\log u(0)\leq c_3\mathbb{I}_n$ and $DF\in L^1(0,\tau;L^\infty(\R^n))$ with $D^2 F\geq -c_{f,3}(t)\mathbb{I}_n$, $c_{f,3}\in L^1(0,T)$. Assume that for some positive constants $K_0,K>0$
\[
\|D\log u(0)\|_{L^\infty(\R^n)}\leq K_0\text{ and }\|D\log u\|_{L^\infty(S_T)}\leq K.
\]
Then
\[
D^2 \log u \leq (K_0^2+K\|DF\|_{L^1(0,\tau;L^\infty(\R^n))})\mathbb{I}_n+c_3\mathbb{I}_n+\mathbb{I}_n\int_0^\tau c_{f,3}(t)\,dt.
\]
\end{itemize}
\end{thm}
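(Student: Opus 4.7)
The plan is to set $v=\log u$, which, because of the potential, now satisfies the modified viscous Hamilton-Jacobi equation $\partial_t v-\Delta v-|Dv|^2=-F$ in $S_T$. Repeating the algebraic manipulations of Lemma \ref{eqz}-(ii) with the extra source term, I find that $h=\langle D^2v\,\xi,\xi\rangle$, for an arbitrary $\xi\in\R^n$, solves
\[
\partial_t h-\Delta h-2|D^2v\,\xi|^2-2\langle Dv,Dh\rangle=-\langle D^2 F\,\xi,\xi\rangle,
\]
while taking the trace gives the equation for $w=\Delta v$ with forcing $-\Delta F$, and repeating the one-variable computation of Theorem \ref{BL} gives the equation for $w=v_{ee}$ with forcing $-F_{ee}$. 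By \eqref{pres1} the drift $b=2Dv=2Du/u$ is in $L^\infty(S_\tau)$, hence Theorem \ref{well} applies to the adjoint problem \eqref{fp} on $\R^n$ with this drift and yields, for any $\rho_\tau\in\mathcal{F}^+_\infty(\R^n)$, a nonnegative solution $\rho$ satisfying $\int_{\R^n}\rho(t)\,dx=1$ for all $t\in[0,\tau]$.

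For part (a) I would test the equation for $h$ against $\rho$. Discarding the nonnegative quadratic term $2|D^2v\,\xi|^2$, using the hypothesis $h(0)\geq -c_1|\xi|^2$, the bound $\langle D^2F\,\xi,\xi\rangle\leq c_{f,1}(t)|\xi|^2$, and the conservation of mass, I obtain
\[
\int_{\R^n}h(\tau)\,\rho_\tau\,dx\geq -c_1|\xi|^2-|\xi|^2\int_0^\tau c_{f,1}(t)\,dt,
\]
and passing to the supremum over $\rho_\tau\in\mathcal{F}^+_\infty(\R^n)$ and over $|\xi|=1$ yields the stated matrix bound. Part (b) is entirely analogous, applied to the equation for $\Delta v$ and using $|D^2v|^2\geq 0$ together with $\Delta F\leq c_{f,2}(t)$.

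For part (c) the sign of the quadratic term is now the unfavorable one, so I would first produce a two-side integral control of $|D^2v|^2$ against $\rho$ in the spirit of Theorem \ref{BL}. Rewriting Lemma \ref{eqz}-(i) with the source yields
\[
\partial_t\omega-\Delta\omega+2|D^2v|^2-2\langle Dv,D\omega\rangle=-2\langle Dv,DF\rangle,\qquad \omega=|Dv|^2,
\]
and testing against $\rho$ together with $\omega(0)\leq K_0^2$, $\|Dv\|_\infty\leq K$, and Fubini for the crossed term bounds $\iint_{S_\tau}|D^2v|^2\rho\,dxdt$ by a constant controlled by $K_0^2$ and $K\|DF\|_{L^1(0,\tau;L^\infty(\R^n))}$. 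Plugging this bound into the dual identity for $w=v_{ee}$, together with $w(0)\leq c_3$, $|Dv_e|^2\leq|D^2v|^2$, and $-F_{ee}\leq c_{f,3}(t)$, delivers the claimed upper bound on $\langle D^2\log u(\tau)\,e,e\rangle$ after taking the supremum over $\rho_\tau$ and over unit vectors $e$.

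The genuinely new point compared to the preceding theorems is purely bookkeeping: each of the source terms $-\langle D^2F\,\xi,\xi\rangle$, $-\Delta F$, $-2\langle Dv,DF\rangle$, $-F_{ee}$ must be absorbed after testing against $\rho$, which is why the hypotheses postulate $L^1$-in-time control of the relevant norm of $F$ (resp.\ $DF$) and why the conservation of mass $\int_{\R^n}\rho(t)\,dx=1$ enters in a crucial way to separate time and space integrations. No new analytic obstacle arises, and the approximation $v=\log(u+\delta)$ can be performed verbatim as in the proofs of Theorems \ref{liyauconvex} and \ref{HSZ2nd} to make the test functions admissible, letting $\delta\to 0^+$ at the end.
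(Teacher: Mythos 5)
Your proposal is correct and follows essentially the same route as the paper: parts (a)--(b) by testing the sourced equations for $h=\langle D^2v\,\xi,\xi\rangle$ and $\Delta v$ against the adjoint solution $\rho$ and discarding the favorably-signed quadratic term, and part (c) by first deriving the integral bound on $\iint_{S_\tau}|D^2v|^2\rho$ from the equation for $\omega=|Dv|^2$ with source $-2\langle DF,Dv\rangle$ (your factor $2$ here is in fact the correct one) and then inserting it into the duality identity for $v_{ee}$. No substantive difference from the paper's argument.
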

\begin{rem}
Part (c) agrees, providing a quantitative statement, with Theorem 6.1 in \cite{BrascampLieb} valid for the heat equation with convex potential (i.e. when $c_{f,3}=0$), see also \cite{Borell} and Theorem 1.13 of \cite{BL75}. Note that the results in \cite{BL75,BrascampLieb} apply for $F=F(x)$ convex with an addition of a quadratic potential, show the preservation of log-concavity instead of semiconcavity, and do not provide an explicit estimate on the size of the norm. We also refer to Theorem 2.1 and 2.2 of \cite{YauCAG} for related quantitative results.
\end{rem}

\begin{proof}
We again proceed with the transformation $v=\log u$ to find
\[
\partial_t v-\Delta v-|Dv|^2=-F(x,t).
\]
We show only the second estimate, the more general on the quadratic form $h$ being similar. We set $w=\Delta v$ and discover that by Lemma \ref{eqz}-(ii) with $\chi=1$ the function $w$ solves
\[
\partial_t w-\Delta w-2|D^2v|^2-2\left\langle Dv, Dw\right\rangle=-\Delta F.
\]
By duality, using the conservation of mass for the adjoint problem and the fact that $\rho\geq0$, we get
\begin{multline*}
\int_{\R^n}w(\tau)\rho(\tau)\,dx\\
=2\iint_{Q_\tau}|D^2v|^2\rho\,dxdt+\int_{\R^n}w(0)\rho(0)-\iint_{Q_\tau} (\Delta F)\rho\,dxdt \geq -c_2-\int_0^\tau c_{f,2}(t)\,dt.
\end{multline*}
This implies that
\[
\Delta\log u(\tau)\geq -c_{2}-\int_0^\tau c_{f,2}(t)\,dt.
\]
The estimate on the Hessian is similar (see subSection \ref{warm}).\\
We now discuss the semi-log-concavity estimate in the presence of a potential. This requires an estimate on the second-order term as in Theorem \ref{BL}, as we are proving an estimate in the opposite direction. As in the first part of Theorem \ref{BL} we claim that
\begin{equation}\label{K0K}
2\iint_{S_\tau}|D^2v|^2\rho\,dxdt\leq K_0^2+K\|DF\|_{L^1(0,\tau;L^\infty(\R^n))}.
\end{equation}
Indeed, $\omega=|Dv|^2$ solves
\[
\partial_t\omega-\Delta \omega+2|D^2v|^2-2\left\langle Dv, D\omega\right\rangle=-\left\langle DF, Dv\right\rangle.
\]
By duality, this implies
\[
2\iint_{S_\tau}|D^2v|^2\rho\,dxdt=\int_{\R^n}\omega(0)\rho(0)\,dx-\int_{\R^n}\omega(\tau)\rho(\tau)\,dx-\iint_{S_\tau}\left\langle DF, Dv\right\rangle\rho\,dxdt.
\]
Then \eqref{K0K} follows. Therefore, the equation solved by $w=v_{ee}$ reads
\[
\partial_t w-\Delta w+2|Dv_e|^2-2\left\langle Dv, Dw\right\rangle=-\partial_{ee}F.
\]
By duality and using \eqref{K0K} this yields
\begin{multline*}
\int_{\R^n}w(\tau)\rho_\tau(x)\,dx=-2\iint_{S_\tau}|Dv_e|^2\rho\,dxdt-\iint_{S_\tau}F_{ee}\rho\,dxdt+\int_{\R^n}w(0)\rho(0)\,dx\\
\leq K_0^2+K\|DF\|_{L^1(0,\tau;L^\infty(\R^n))}+c_{3}+\int_0^\tau c_{f,3}(t)\,dt.
\end{multline*}

\end{proof}

%\bibliography{LiYaubiblio-3}

\begin{thebibliography}{10}

\bibitem{Adams}
R.~A. Adams.
\newblock {\em Sobolev spaces}.
\newblock Pure and Applied Mathematics, Vol. 65. Academic Press, New
  York-London, 1975.

\bibitem{AlvarezLasryLions}
O.~Alvarez, J.-M. Lasry, and P.-L. Lions.
\newblock Convex viscosity solutions and state constraints.
\newblock {\em J. Math. Pures Appl. (9)}, 76(3):265--288, 1997.

\bibitem{Amann}
H.~Amann.
\newblock Existence and regularity for semilinear parabolic evolution
  equations.
\newblock {\em Ann. Sc. Norm. Super. Pisa, Cl. Sci., IV. Ser.}, 11:593--676,
  1984.

\bibitem{AmannCrelle}
H.~Amann.
\newblock Global existence for semilinear parabolic systems.
\newblock {\em J. Reine Angew. Math.}, 360:47--83, 1985.

\bibitem{AndrewsSurvey}
B.~Andrews.
\newblock Gradient and oscillation estimates and their applications in
  geometric {PDE}.
\newblock In {\em Fifth international congress of Chinese mathematicians.
  Proceedings of the ICCM '10, Beijing, China, December 17--22, 2010. Part 1},
  pages 3--19. Providence, RI: American Mathematical Society (AMS); Somerville,
  MA: International Press, 2012.

\bibitem{AndrewsClutterbuck}
B.~Andrews and J.~Clutterbuck.
\newblock Time-interior gradient estimates for quasilinear parabolic equations.
\newblock {\em Indiana Univ. Math. J.}, 58(1):351--380, 2009.

\bibitem{ArTer}
W.~Arendt and A.~F.~M. ter Elst.
\newblock Gaussian estimates for second order elliptic operators with boundary
  conditions.
\newblock {\em J. Operator Theory}, 38(1):87--130, 1997.

\bibitem{AronsonBenilan}
D.~G. Aronson and P.~B{\'e}nilan.
\newblock R{\'e}gularit{\'e} des solutions de l'{\'e}quation des milieux poreux
  dans {{\(\mathbb{R}^N\)}}.
\newblock {\em C. R. Acad. Sci., Paris, S{\'e}r. A}, 288:103--105, 1979.

\bibitem{BakryGentilLedouxSNS}
D.~Bakry, I.~Gentil, and M.~Ledoux.
\newblock On {Harnack} inequalities and optimal transportation.
\newblock {\em Ann. Sc. Norm. Super. Pisa, Cl. Sci. (5)}, 14(3):705--727, 2015.

\bibitem{BakryLedoux}
D.~Bakry and M.~Ledoux.
\newblock A logarithmic {Sobolev} form of the {Li}-{Yau} parabolic inequality.
\newblock {\em Rev. Mat. Iberoam.}, 22(2):683--702, 2006.

\bibitem{BakryQian}
D.~Bakry and Z.~M. Qian.
\newblock Harnack inequalities on a manifold with positive or negative {Ricci}
  curvature.
\newblock {\em Rev. Mat. Iberoam.}, 15(1):143--179, 1999.

\bibitem{BKL}
S.~Benachour, G.~Karch, and P.~Lauren{\c{c}}ot.
\newblock Asymptotic profiles of solutions to viscous {Hamilton}--{Jacobi}
  equations.
\newblock {\em J. Math. Pures Appl. (9)}, 83(10):1275--1308, 2004.

\bibitem{Bernstein}
S.~Bernstein.
\newblock Sur la g\'{e}n\'{e}ralisation du probl\`eme de {D}irichlet.
\newblock {\em Math. Ann.}, 69(1):82--136, 1910.

\bibitem{Bez}
N.~Bez, S.~Nakamura, and H.~Tsuji.
\newblock Stability of hypercontractivity, the logarithmic {Sobolev}
  inequality, and {Talagrand}'s cost inequality.
\newblock {\em J. Funct. Anal.}, 285(10):66, 2023.
\newblock Id/No 110121.

\bibitem{BKRS}
V.~I. Bogachev, N.~V. Krylov, M.~R\"{o}ckner, and S.~V. Shaposhnikov.
\newblock {\em Fokker-{P}lanck-{K}olmogorov equations}, volume 207 of {\em
  Mathematical Surveys and Monographs}.
\newblock American Mathematical Society, Providence, RI, 2015.

\bibitem{Borell}
C.~Borell.
\newblock Diffusion equations and geometric inequalities.
\newblock {\em Potential Anal.}, 12(1):49--71, 2000.

\bibitem{BL75}
H.~J. Brascamp and E.~H. Lieb.
\newblock Some inequalities for {Gaussian} measures and the long-range order of
  the one-dimensional plasma.
\newblock Funct. {Integr}. {Appl}., {Proc}. int. {Conf}., {London} 1974, 1-14
  (1975)., 1975.

\bibitem{BrascampLieb}
H.~J. Brascamp and E.~H. Lieb.
\newblock On extensions of the {Brunn}-{Minkowski} and {Prekopa}-{Leindler}
  theorems, including inequalities for log concave functions, and with an
  application to the diffusion equation.
\newblock {\em J. Funct. Anal.}, 22:366--389, 1976.

\bibitem{Brenier}
Y.~Brenier.
\newblock Examples of hidden convexity in nonlinear {PDEs}, {L}ecture {N}otes
  at {ETH} {Z}urich.
\newblock hal-02928398, 2020.

\bibitem{BrezisKinderlehrer}
H.~Br{\'e}zis and D.~Kinderlehrer.
\newblock The smoothness of solutions to nonlinear variational inequalities.
\newblock {\em Indiana Univ. Math. J.}, 23:831--844, 1974.

\bibitem{Browder}
F.~E. Browder.
\newblock A-priori estimates for solutions of elliptic boundary-value problems.
  {I}.
\newblock {\em Nederl. Akad. Wet., Proc., Ser. A}, 63:145--159, 160--169, 1960.

\bibitem{CC}
L.~A. Caffarelli and X.~Cabr\'{e}.
\newblock {\em Fully nonlinear elliptic equations}, volume~43 of {\em American
  Mathematical Society Colloquium Publications}.
\newblock American Mathematical Society, Providence, RI, 1995.

\bibitem{CaffarelliFriedman}
L.~A. Caffarelli and A.~Friedman.
\newblock Convexity of solutions of semilinear elliptic equations.
\newblock {\em Duke Math. J.}, 52:431--456, 1985.

\bibitem{CS}
P.~Cannarsa and C.~Sinestrari.
\newblock {\em Semiconcave functions, {Hamilton}-{Jacobi} equations, and
  optimal control}, volume~58 of {\em Prog. Nonlinear Differ. Equ. Appl.}
\newblock Boston, MA: Birkh{\"a}user, 2004.

\bibitem{surveyMFG}
P.~Cardaliaguet and A.~Porretta.
\newblock An introduction to mean field game theory.
\newblock In {\em Mean field games. Cetraro, Italy, June 10--14, 2019. Lecture
  notes given at the summer school}, pages 1--158. Cham: Springer; Florence:
  Fondazione CIME, 2020.

\bibitem{SalaniMemoirs}
C.~Chen, X.~Ma, and P.~Salani.
\newblock {\em On space-time quasiconcave solutions of the heat equation},
  volume 1244 of {\em Mem. Am. Math. Soc.}
\newblock Providence, RI: American Mathematical Society (AMS), 2019.

\bibitem{ChengYau}
S.~Y. Cheng and S.-T. Yau.
\newblock Differential equations on {Riemannian} manifolds and their geometric
  applications.
\newblock {\em Commun. Pure Appl. Math.}, 28:333--354, 1975.

\bibitem{Ricciflowbook}
B.~Chow, S.-C. Chu, D.~Glickenstein, C.~Guenther, J.~Isenberg, T.~Ivey,
  D.~Knopf, P.~Lu, F.~Luo, and L.~Ni.
\newblock {\em The {Ricci} flow: techniques and applications. {Part} {II}:
  {Analytic} aspects}, volume 144 of {\em Math. Surv. Monogr.}
\newblock Providence, RI: American Mathematical Society (AMS), 2008.

\bibitem{ChowHamilton}
B.~Chow and R.~S. Hamilton.
\newblock Constrained and linear {Harnack} inequalities for parabolic
  equations.
\newblock {\em Invent. Math.}, 129(2):213--238, 1997.

\bibitem{cg20}
M.~Cirant and A.~Goffi.
\newblock Lipschitz regularity for viscous {Hamilton}-{Jacobi} equations with
  {{\(L^p\)}} terms.
\newblock {\em Ann. Inst. Henri Poincar{\'e}, Anal. Non Lin{\'e}aire},
  37(4):757--784, 2020.

\bibitem{Daners}
D.~Daners.
\newblock Heat kernel estimates for operators with boundary conditions.
\newblock {\em Math. Nachr.}, 217:13--41, 2000.

\bibitem{DautrayLions}
R.~Dautray and J.-L. Lions.
\newblock {\em Mathematical analysis and numerical methods for science and
  technology. {Vol}. 5: {Evolution} problems {I}.}
\newblock Berlin etc.: Springer-Verlag, 1992.

\bibitem{Davies}
E.~B. Davies.
\newblock {\em Heat kernels and spectral theory}, volume~92 of {\em Camb.
  Tracts Math.}
\newblock Cambridge University Press, 1989.

\bibitem{DK}
T.~Deck and S.~Kruse.
\newblock Parabolic differential equations with unbounded coefficients---a
  generalization of the parametrix method.
\newblock {\em Acta Appl. Math.}, 74(1):71--91, 2002.

\bibitem{DenkHieberPruss}
R.~Denk, M.~Hieber, and J.~Pr{\"u}ss.
\newblock {\em {{\({\mathcal R}\)}}-boundedness, {Fourier} multipliers and
  problems of elliptic and parabolic type}, volume 788 of {\em Mem. Am. Math.
  Soc.}
\newblock Providence, RI: American Mathematical Society (AMS), 2003.

\bibitem{Douglis}
A.~Douglis.
\newblock The continuous dependence of generalized solutions of non-linear
  partial differential equations upon initial data.
\newblock {\em Commun. Pure Appl. Math.}, 14:267--284, 1961.

\bibitem{EstebanMarcati}
J.~R. Esteban and P.~Marcati.
\newblock Approximate solutions to first and second order quasilinear evolution
  equations via nonlinear viscosity.
\newblock {\em Trans. Am. Math. Soc.}, 342(2):501--521, 1994.

\bibitem{EstebanVazquez}
J.~R. Esteban and J.~L. V{\'a}zquez.
\newblock R{\'e}gularit{\'e} des solutions positives de l'{\'e}quation
  parabolique p- laplacienne. ({Regularity} of nonnegative solutions of the
  p-{Laplacian} parabolic equation).
\newblock {\em C. R. Acad. Sci., Paris, S{\'e}r. I}, 310(3):105--110, 1990.

\bibitem{EvansNotes}
L.~C. Evans.
\newblock Entropy and {P}artial {D}ifferential {E}quations, {L}ecture {N}otes
  at {UC} {B}erkeley.
\newblock 2004.

\bibitem{EvansSurvey}
L.~C. Evans.
\newblock A survey of entropy methods for partial differential equations.
\newblock {\em Bull. Amer. Math. Soc. (N.S.)}, 41(4):409--438, 2004.

\bibitem{EvansKAM}
L.~C. Evans.
\newblock A survey of partial differential equations methods in weak {KAM}
  theory.
\newblock {\em Commun. Pure Appl. Math.}, 57(4):445--480, 2004.

\bibitem{EvansARMA10}
L.~C. Evans.
\newblock Adjoint and compensated compactness methods for {H}amilton-{J}acobi
  {PDE}.
\newblock {\em Arch. Ration. Mech. Anal.}, 197(3):1053--1088, 2010.

\bibitem{EvansBook}
L.~C. Evans.
\newblock {\em Partial differential equations}, volume~19 of {\em Grad. Stud.
  Math.}
\newblock Providence, RI: American Mathematical Society (AMS), 2nd ed. edition,
  2010.

\bibitem{Fleming}
W.~H. Fleming.
\newblock The {C}auchy problem for a nonlinear first order partial differential
  equation.
\newblock {\em J. Differential Equations}, 5:515--530, 1969.

\bibitem{FlemingSoner}
W.~H. Fleming and H.~M. Soner.
\newblock {\em Controlled {Markov} processes and viscosity solutions},
  volume~25 of {\em Stoch. Model. Appl. Probab.}
\newblock New York, NY: Springer, 2nd ed. edition, 2006.

\bibitem{Gigaetal}
Y.~Giga, S.~Goto, H.~Ishii, and M.-H. Sato.
\newblock Comparison principle and convexity preserving properties for singular
  degenerate parabolic equations on unbounded domains.
\newblock {\em Indiana Univ. Math. J.}, 40(2):443--470, 1991.

\bibitem{GomesSM}
D.~Gomes and H.~S\'anchez~Morgado.
\newblock A stochastic {E}vans-{A}ronsson problem.
\newblock {\em Trans. Amer. Math. Soc.}, 366(2):903--929, 2014.

\bibitem{Grisvard}
P.~Grisvard.
\newblock {\em Elliptic problems in nonsmooth domains}, volume~24 of {\em
  Monogr. Stud. Math.}
\newblock Pitman, Boston, MA, 1985.

\bibitem{Gus}
A.~K. Gu\v{s}\v{c}in.
\newblock Some properties of the generalized solution of the second boundary
  value problem for a parabolic equation.
\newblock {\em Mat. Sb. (N.S.)}, 97(139)(2(6)):242--261, 318, 1975.

\bibitem{Hamilton}
R.~S. Hamilton.
\newblock A matrix {Harnack} estimate for the heat equation.
\newblock {\em Commun. Anal. Geom.}, 1(1):113--126, 1993.

\bibitem{HanZhang}
Q.~Han and Q.~S. Zhang.
\newblock An upper bound for {Hessian} matrices of positive solutions of heat
  equations.
\newblock {\em J. Geom. Anal.}, 26(2):715--749, 2016.

\bibitem{IshiiLions}
H.~Ishii and P.~L. Lions.
\newblock Viscosity solutions of fully nonlinear second-order elliptic partial
  differential equations.
\newblock {\em J. Differ. Equations}, 83(1):26--78, 1990.

\bibitem{Kawohl}
B.~Kawohl.
\newblock {\em Rearrangements and convexity of level sets in {PDE}}, volume
  1150 of {\em Lect. Notes Math.}
\newblock Springer, Cham, 1985.

\bibitem{KohnSerfaty}
R.~V. Kohn and S.~Serfaty.
\newblock A deterministic-control-based approach to fully nonlinear parabolic
  and elliptic equations.
\newblock {\em Commun. Pure Appl. Math.}, 63(10):1298--1350, 2010.

\bibitem{Korevaar}
N.~J. Korevaar.
\newblock Convex solutions to nonlinear elliptic and parabolic boundary value
  problems.
\newblock {\em Indiana Univ. Math. J.}, 32:603--614, 1983.

\bibitem{KprocAMS}
B.~L. Kotschwar.
\newblock Hamilton's gradient estimate for the heat kernel on complete
  manifolds.
\newblock {\em Proc. Am. Math. Soc.}, 135(9):3013--3019, 2007.

\bibitem{K67}
S.~N. Kruzhkov.
\newblock Generalized solutions of nonlinear equations of the first order with
  several independent variables. {II}.
\newblock {\em Mat. Sb. (N.S.)}, 72 (114):108--134, 1967.

\bibitem{LSU}
O.~A. Ladyzenskaja, V.~A. Solonnikov, and N.~N. Ural'ceva.
\newblock {\em Linear and quasilinear equations of parabolic type}.
\newblock Translations of Mathematical
  Monographs, Vol. 23. American Mathematical Society, Providence, R.I., 1968.

\bibitem{LanconelliPascucci}
E.~Lanconelli and A.~Pascucci.
\newblock On the fundamental solution for hypoelliptic second order partial
  differential equations with non-negative characteristic form.
\newblock {\em Ric. Mat.}, 48(1):81--106, 1999.

\bibitem{LasryLionsIsrael}
J.-M. Lasry and P.-L. Lions.
\newblock A remark on regularization in {Hilbert} spaces.
\newblock {\em Isr. J. Math.}, 55:257--266, 1986.

\bibitem{LL}
J.-M. Lasry and P.-L. Lions.
\newblock Mean field games.
\newblock {\em Jpn. J. Math.}, 2(1):229--260, 2007.

\bibitem{LeeVazquez}
K.-A. Lee and J.~L. V{\'a}zquez.
\newblock Geometrical properties of solutions of the porous medium equation for
  large times.
\newblock {\em Indiana Univ. Math. J.}, 52(4):991--1016, 2003.

\bibitem{LeRiVe}
G.~P. Leonardi, M.~Ritor\'{e}, and E.~Vernadakis.
\newblock Isoperimetric inequalities in unbounded convex bodies.
\newblock {\em Mem. Amer. Math. Soc.}, 276(1354):1--86, 2022.

\bibitem{LeoniLectures}
G.~Leoni.
\newblock Partial {D}ifferential {E}quations {II}, {L}ecture {N}otes at
  {C}arnegie {M}ellon.
\newblock 2014.

\bibitem{Leoni}
G.~Leoni.
\newblock {\em A first course in Sobolev spaces}, volume 181 of {\em Graduate
  Studies in Mathematics}.
\newblock American Mathematical Society, 2024.

\bibitem{LiBook}
P.~Li.
\newblock {\em Geometric analysis}, volume 134 of {\em Camb. Stud. Adv. Math.}
\newblock Cambridge: Cambridge University Press, 2012.

\bibitem{LiYauActa}
P.~Li and S.~T. Yau.
\newblock On the parabolic kernel of the {Schr{\"o}dinger} operator.
\newblock {\em Acta Math.}, 156:154--201, 1986.

\bibitem{LinTadmor}
C.-T. Lin and E.~Tadmor.
\newblock {{\(L^1\)}}-stability and error estimates for approximate
  {Hamilton}-{Jacobi} solutions.
\newblock {\em Numer. Math.}, 87(4):701--735, 2001.

\bibitem{JLLions}
J.-L. Lions.
\newblock {\em \'{E}quations diff\'{e}rentielles op\'{e}rationnelles et
  probl\`emes aux limites}.
\newblock Die Grundlehren der mathematischen Wissenschaften, Band 111.
  Springer-Verlag, Berlin-G\"{o}ttingen-Heidelberg, 1961.

\bibitem{Lions82Book}
P.-L. Lions.
\newblock {\em Generalized solutions of {H}amilton-{J}acobi equations},
  volume~69 of {\em Research Notes in Mathematics}.
\newblock Pitman (Advanced Publishing Program), Boston, Mass.-London, 1982.

\bibitem{Lions85aa}
P.-L. Lions.
\newblock Regularizing effects for first-order {H}amilton-{J}acobi equations.
\newblock {\em Applicable Anal.}, 20(3-4):283--307, 1985.

\bibitem{LionsMusiela}
P.-L. Lions and M.~Musiela.
\newblock Convexity of solutions of parabolic equations.
\newblock {\em C. R., Math., Acad. Sci. Paris}, 342(12):915--921, 2006.

\bibitem{LiuSurvey}
Q.~Liu.
\newblock Semiconvexity of viscosity solutions to fully nonlinear evolution
  equations via discrete games.
\newblock In {\em Geometric properties for parabolic and elliptic PDE's.
  Contributions of the 6th Italian-Japanese workshop, Cortona, Italy, May
  20--24, 2019}, pages 205--231. Cham: Springer, 2021.

\bibitem{LiuPJM}
S.~Liu.
\newblock Gradient estimates for solutions of the heat equation under {Ricci}
  flow.
\newblock {\em Pac. J. Math.}, 243(1):165--180, 2009.

\bibitem{LoreBerto}
L.~Lorenzi and M.~Bertoldi.
\newblock {\em Analytical methods for {M}arkov semigroups}, volume 283 of {\em
  Pure and Applied Mathematics (Boca Raton)}.
\newblock Chapman \& Hall/CRC, Boca Raton, FL, 2007.

\bibitem{Villanietal}
P.~Lu, L.~Ni, J.-L. V{\'a}zquez, and C.~Villani.
\newblock Local {Aronson}-{B{\'e}nilan} estimates and entropy formulae for
  porous medium and fast diffusion equations on manifolds.
\newblock {\em J. Math. Pures Appl. (9)}, 91(1):1--19, 2009.

\bibitem{NiSurvey}
L.~Ni.
\newblock Monotonicity and {Li}-{Yau}-{Hamilton} inequalities.
\newblock In {\em Geometric flows. Surveys in differential geometry. Vol. XII},
  pages 251--301. Somerville, MA: International Press, 2008.

\bibitem{Oleinik}
O.~A. Ole{\u{\i}}nik.
\newblock Discontinuous solutions of non-linear differential equations.
\newblock {\em Transl., Ser. 2, Am. Math. Soc.}, 26:95--172, 1963.

\bibitem{PorEid}
F.~O. Porper and S.~D. Eidel'man.
\newblock Two-sided estimates of the fundamental solutions of second-order
  parabolic equations and some applications of them.
\newblock {\em Uspekhi Mat. Nauk}, 39(3(237)):107--156, 1984.

\bibitem{PorrARMA}
A.~Porretta.
\newblock Weak solutions to {Fokker}-{Planck} equations and Mean Field Games.
\newblock {\em Arch. Ration. Mech. Anal.}, 216(1):1--62, 2015.

\bibitem{PorrUMI}
A.~Porretta.
\newblock On the weak theory for Mean Field Games systems.
\newblock {\em Boll. Unione Mat. Ital.}, 10(3):411--439, 2017.

\bibitem{Pule}
A.~Pulemotov.
\newblock The Li-Yau-Hamilton estimate and the Yang-Mills heat equation on
  manifolds with boundary.
\newblock {\em J. Funct. Anal.}, 255(10):2933--2965, 2008.

\bibitem{QS}
P.~Quittner and P.~Souplet.
\newblock {\em Superlinear parabolic problems. {Blow}-up, global existence and
  steady states}.
\newblock Birkh{\"a}user Adv. Texts, Basler Lehrb{\"u}ch. Cham: Birkh{\"a}user,
  2nd edition, 2019.

\bibitem{SchoenYau}
R.~Schoen and S.-T. Yau.
\newblock {\em Lectures on differential geometry}, volume~1 of {\em Conf. Proc.
  Lect. Notes Geom. Topol.}
\newblock Cambridge, MA: International Press, 1994.

\bibitem{Serrin}
J.~Serrin.
\newblock Gradient estimates for solutions of nonlinear elliptic and parabolic
  equations.
\newblock In {\em Contributions to nonlinear functional analysis ({P}roc.
  {S}ympos., {M}ath. {R}es. {C}enter, {U}niv. {W}isconsin, {M}adison, {W}is.,
  1971)}, pages 565--601. Academic Press, New York, 1971.

\bibitem{Yauetal}
I.~M. Singer, B.~Wong, S.-T. Yau, and S.~S.-T. Yau.
\newblock An estimate of the gap of the first two eigenvalues in the
  {Schr{\"o}dinger} operator.
\newblock {\em Ann. Sc. Norm. Super. Pisa, Cl. Sci., IV. Ser.}, 12:319--333,
  1985.

\bibitem{SoupletZhang}
P.~Souplet and Q.~S. Zhang.
\newblock Sharp gradient estimate and {Yau}'s {Liouville} theorem for the heat
  equation on noncompact manifolds.
\newblock {\em Bull. Lond. Math. Soc.}, 38(6):1045--1053, 2006.

\bibitem{Stroock}
D.~W. Stroock.
\newblock {\em An introduction to the analysis of paths on a {R}iemannian
  manifold}, volume~74 of {\em Mathematical Surveys and Monographs}.
\newblock American Mathematical Society, Providence, RI, 2000.

\bibitem{Weinkove}
G.~Sz{\'e}kelyhidi and B.~Weinkove.
\newblock Weak {Harnack} inequalities for eigenvalues and constant rank
  theorems.
\newblock {\em Commun. Partial Differ. Equations}, 46(8):1585--1600, 2021.

\bibitem{TranBook}
H.~Tran.
\newblock {\em Hamilton-Jacobi equations: {T}heory and {A}pplications}.
\newblock AMS Graduate Studies in Mathematics. American Mathematical Society,
  2021.

\bibitem{VazquezBook}
J.~L. V{\'a}zquez.
\newblock {\em The porous medium equation. {Mathematical} theory}.
\newblock Oxford Math. Monogr. Oxford: Oxford University Press, 2007.

\bibitem{WangPJM}
F.-Y. Wang.
\newblock Gradient and {Harnack} inequalities on noncompact manifolds with
  boundary.
\newblock {\em Pac. J. Math.}, 245(1):185--200, 2010.

\bibitem{Yau}
S.-T. Yau.
\newblock Harmonic functions on complete {Riemannian} manifolds.
\newblock {\em Commun. Pure Appl. Math.}, 28:201--228, 1975.

\bibitem{YauCAG}
S.-T. Yau.
\newblock On the {Harnack} inequalities of partial differential equations.
\newblock {\em Commun. Anal. Geom.}, 2(3):431--450, 1994.

\bibitem{ZhangIMRN}
Q.~S. Zhang.
\newblock Some gradient estimates for the heat equation on domains and for an
  equation by {Perelman}.
\newblock {\em Int. Math. Res. Not.}, 2006(15):39, 2006.
\newblock Id/No 92314.

\end{thebibliography}
%\bibliographystyle{abbrv}

\end{document}